\crefname{hypothesis}{Hypothesis}{Hypotheses}
\title{Synchronization in the complexified Kuramoto model\thanks{\today
\funding{Ting-Yang Hsiao is supported by the ERC CONSOLIDATOR GRANT 2023 ``Generating Unstable Dynamics in dispersive Hamiltonian fluids'', Project Number: 101124921.
 Views and opinions expressed are however those of the authors only and do not necessarily reflect those of the European Union or the European Research Council. Neither the European Union nor the granting authority can be held responsible for them.}}}
\author{Hsiao, Ting-Yang\thanks{(Corresponding author) International School for Advanced Studies (SISSA), Via Bonomea 265, 34136, Trieste, Italy. 
 (\email{thsiao@sissa.it}),}
\and Lo, Yun-Feng \thanks{School of Electrical and Computer Engineering, Georgia Institute of
Technology, Atlanta, 30332, Georgia, USA 
 (\email{ylo49@gatech.edu}),}
\and Wang, Winnie \thanks{Department of Physics,University of
Wisconsin-Madison, Madison, 53706, Wisconsin, USA 
  (\email{wwang629@wisc.edu})}.}
\newcommand*{\addFileDependency}[1]{
  \typeout{(#1)}
  \@addtofilelist{#1}
  \IfFileExists{#1}{}{\typeout{No file #1.}}
}
\newcommand*{\myexternaldocument}[1]{%
    \externaldocument{#1}%
    \addFileDependency{#1.tex}%
    \addFileDependency{#1.aux}%
}
\begin{document}

\maketitle

\begin{abstract}
 In this paper, we consider an $N$-oscillators complexified Kuramoto model. We first observe that there are solutions exhibiting finite-time blow-up behavior in all coupling regimes. When the coupling strength $\lambda>\lambda_c$, sufficient conditions for various types of synchronization are established for general $N \geq 2$. On the other hand, we analyze the case when the coupling strength is weak. For $N=2$ with coupling below $\lambda_c$, our complex-analytic approach not only recovers the periodic orbits reported by Th\"umler--Srinivas--Schr\"oder--Timme but also provides, for the first time, their exact period $T_{\omega,\lambda}=2\pi/\sqrt{\omega^{2}-\lambda^{2}}$, confirming full phase locking. Furthermore, for the critical case $\lambda = \lambda_c$, 
we find that the complexified Kuramoto system admits homoclinic orbits.  These phenomena significantly differentiate the complexified Kuramoto model from the real Kuramoto system, as synchronization never occurs when $\lambda<\lambda_c$ in the latter. For $N=3$, we demonstrate that if the natural frequencies are in arithmetic progression, non-trivial synchronization states can be achieved for certain initial conditions even when the coupling strength is weak. In particular, we characterize the critical coupling strength ($\lambda/\lambda_c = 0.85218915...$) such that a semistable equilibrium point in the real Kuramoto model bifurcates into a pair of stable and unstable equilibria, marking a new phenomenon in complexified Kuramoto models.
\end{abstract}

\begin{keywords}
Complexified Kuramoto model, Coupled oscillators, Synchronization, Kuramoto model, Lyapunov function, Cherry flow.
\end{keywords}

\begin{AMS}
  	34D06, 37C27, 37C20
\end{AMS}

\section{Introduction}\label{sec1}
Synchronization is the natural phenomenon of collective oscillation observed in systems consisting only of autonomous oscillators. This phenomenon has been widely studied since Huygens' Horologium Oscillatorium \cite{huygens1986christiaan}; for modern studies, we refer readers to the general description materials in \cite{reviewstrogatz2000, reviewstrogatz2001, bookpikovsky2001, reviewboccaletti2002, bookstrogatz2004, bookosipov2007}.

The first attempt at solving this problem came from Winfree \cite{reviewwinfree1966}, who studied the nonlinear dynamics of a coupled system with $N$ oscillators in the large-$N$ limit via a mean-field approach (see also \cite{reviewstrogatz2000}). Kuramoto refined this model in 1975 \cite{kuramoto1975self}. The resulting \emph{Kuramoto model} \cite{bookkuramoto1984} is a system of weakly coupled, nearly identical, and interacting limit cycle oscillators. 
For a system of $N$ oscillators (see, for instance, \cite{van1993lyapunov, jadbabaie2004stability, diego2005thermodynamic, lasko2007phase, bronski2011, rodrigues2016kuramoto, hsia2019synchronization, chen2020mathematical, ha2020asymptotic, bronski2021, chen2021synchronization, chen2022frequency, roberto2022, roberto2023}), the Kuramoto model is a system of $N$ nonlinear ordinary differential equations (ODEs): for $n=1,\ldots,N$,
\begin{align}\label{first}
{\dot{\theta}}_n = \omega_n + \frac{\lambda}{N} \sum^{N}_{m=1} \sin(\theta_m-\theta_n),
\end{align}
where for the $n$th oscillator, $\theta_n$ is its phase angle and $\omega_n$ its natural frequency. Here, both quantities are assumed to be real. $\lambda$, assumed positive, denotes the coupling strength between the $N$ oscillators.



Recently, researchers have shifted their focus to higher-dimensional interactions and generalized the Kuramoto models by using different algebraic structures. There are many ongoing efforts to further our understanding of the synchronization phenomena through the study of the \emph{Lohe model}, which is a non-Abelian generalization of the Kuramoto model \cite{lohe2009nonabelian, choi2014complete, ha2016collective, ha2018relaxation, ritchie2018synchronization, deville2019synchronization}. Concurrently, there have been ever-increasing interests in the complexified Kuramoto models \cite{ha2012class, thumler2023synchrony, lee2024complexified}; more generally, synchronization phenomena in a system of oscillators with complex-valued quantities \cite{ha2021collective, bottcher2023complex, cestnik2024integrability, hsiao2023synchronization, lee2024complexified, chen2024phase}. 

Ha et al. first proposed the complexified Kuramoto model \cite[Eq. (2.3)]{ha2012class} as an example of a general model \cite[Eq. (2.1)]{ha2012class} for flocking phenomena on an infinite cylinder. The authors also analyzed the behavior of non-identical oscillators when the coupling strength is large enough \cite[Assumption (H1)]{ha2012class} (the case $\lambda>\lambda_c$). Moreover, the authors assume that, initially, the spread in the real part ($x$) is strictly smaller than $\pi/2$, and that in the imaginary part ($y$) is bounded by some constant dependent on system parameter \cite[Assumption (H2)]{ha2012class}. 

Th\"{u}mler et al. considered the complexified Kuramoto model in the regime of weak coupling \cite{thumler2023synchrony}. The authors observed that for $N=2$ oscillators there exists a conserved quantity (``energy") when the coupling is weak, and analyzed the system behavior based on this quantity. They also conducted simulations for more than $N=2$ oscillators in the weak coupling regime \cite[Supplementary Material, Section VI]{thumler2023synchrony} and observed that solution behavior of the complexified model seems to imply some form of synchronization of the original (real-variable) Kuramoto model. 

A follow-up work \cite{lee2024complexified} to \cite{thumler2023synchrony} considered the complexified Kuramoto model in a new scenario where the coupling strength can take complex instead of real values, which all previous work assumed. The authors analyzed in depth the case of $N=2$ complexified Kuramoto oscillators for different cases of the complex coupling strength. They also studied systems of $N\geq 2$ complexified Kuramoto model and conducted numerical simulations of their behavior.

Although the complexified Kuramoto model has been studied in these previous works, and many distinctions of the dynamics of this model from that of the classical real Kuramoto model have been found, it appears to the authors that the possibility of solutions exhibiting finite-time blow-up behavior in the complexified model has never been observed in the literature. In Appendix~\ref{appen:finite-time_blow-up}, we explicitly show the finite-time blow-up behavior of one solution $(x_1(t),y_1(t))$, $(x_2(t),y_2(t))$ satisfying
\begin{align}
    | y_1(t) - y_2(t) |
    =
    \log \left( \frac{ \omega_1-\omega_2 }{\lambda} \csc( x_1(t) - x_2(t) ) \right)
\end{align}
to the complexifed model with $N=2$ oscillators and $\omega_1>\omega_2$. We note the existence of this particular finite-time blow-up solution for some initial conditions holds true \emph{regardless} of the relative strength of the coupling constant $\lambda$ to the natural frequency gap $\omega_1-\omega_2$. The existence of this finite-time blow-up solution marks a newly observed and key distinction between the complexified and the real Kuramoto model. Moreover, it highlights that the issue of global existence of solutions over time needs to be explicitly taken into account when analyzing stability around equilibria of these complexified models.

\subsection{Contributions}\label{subsec:contrib}

The contributions of this paper are listed below.

\begin{enumerate}
   \item 
   We further analyze the complexified Kuramoto model proposed in \cite{ha2012class}. Since this model can exhibit finite-time blow-up, we also derive sufficient conditions under which the system is globally well-posed (i.e., solutions exist for all time). In particular, based on various regimes of coupling strength to frequency gap ratio, we divide our analysis into different cases (cf. Definition \ref{def 5}), and apply different analytical tools for each case. We also define various modes of synchronization to make the meaning of each mode precise.
   \item 
    In the case $\lambda>\lambda_c$, many of our proofs analytically verify numerical observations in \cite{thumler2023synchrony}. For example, numerical observations in \cite[Section VI, Supplementary Material]{thumler2023synchrony} indicate correlations between real and complexified Kuramoto models; some of these correlations are analytically verified by Lemma~\ref{p and f sync for Im} and Theorem~\ref{F sync} of this paper.
\item 
In the “weak coupling” case $\lambda<\lambda_c$ with $N=2$ oscillators, we provide an alternative proof of the existence of periodic orbits around each equilibrium point by employing complex analysis. A conserved quantity was constructed in a recent work \cite{thumler2023synchrony} to characterize these periodic orbits. However, for more general complex parameters $\omega, \lambda \in \mathbb{C}$, a more natural and generalized framework is required (see, for example, \cite{lee2024complexified, thumler2025collective, lee2025hopf}). In Section~\ref{sec 4}, we adopt a multi-valued complex function approach and define a conserved quantity directly via its imaginary part. Moreover, by applying the residue theorem, we explicitly compute the period of the periodic orbit as $T_{\omega,\lambda} = 2\pi/\sqrt{\omega^2 - \lambda^2}$ (Theorem \ref{weak N=2}), which allows us to describe their properties more precisely. Also, we prove that when $\lambda=\lambda_c$ the system admits homoclinic orbits. While our analysis in this paper is restricted to the present case, the framework developed in Section~\ref{sec 4} is sufficiently general to be applicable to other regimes of parameters. We further make the interesting observation that full phase-locking synchronization is a necessary and sufficient condition for frequency synchronization in the real Kuramoto model \cite{hsiao2025equivalence}, but this equivalence no longer holds in the complexified model, with the aforementioned periodic orbits serving as counterexamples.  
    \item
    It is observed numerically and conjectured in \cite{thumler2023synchrony} that non-trivial phase-locking states exist when the number of complexified Kuramoto oscillators $N$ is greater than or equal to three. We are the first to analytically prove that non-trivial phase-locking state does exist when $N=3$, even when the coupling strength is weak ($\lambda<\Lambda_c$). Such analysis is generally difficult even for the real Kuramoto models, where most results such as \cite{reviewstrogatz2000,dorfler2011critical,bronski2011,ha2020asymptotic} rely on the coupling strength being strong. Notably, our analysis in the ``weak coupling" case (Section~\ref{4.2.2}) also validates some numerical observations in \cite[Fig. 2(b)]{maistrenko2004mechanism} for the Cherry flow in real Kuramoto systems. In particular, we get the exact threshold $\lambda/\omega = \sqrt{\frac{69-11\sqrt{33}}{8}} = 0.85218915...$ for the onset of Cherry flow \cite[Fig.2(b)]{maistrenko2004mechanism}.

\end{enumerate}

\subsection{Organization}\label{subsec:org}

We organize the paper in the following order. In Section \ref{sec 2.2}, we clarify different levels of synchronization for the complexified Kuramoto model in Definition \ref{def 2}-\ref{def 4} (see, for instance, \cite{dorfler2011critical, dorfler2012synchronization, dorfler2014synchronization}). We also introduce two critical coupling strengths, $\lambda_c$ and $\Lambda_c$, in Definition \ref{def 5}. 

In Section \ref{sec 3}, we consider the case when the coupling strength $\lambda>\lambda_c$ for $N \geq 2$ oscillators. Since the real ($x_n$) and imaginary ($y_n$) oscillations can interact with each other (via \eqref{eq x+iy}), we first demonstrate the oscillations of both parts individually by alternating the control between the real and imaginary parts. We estimate the difference of the real part between each pair of oscillators (Lemma~\ref{thm 2}). Then, we demonstrate that the oscillators arrive exponentially fast at both frequency and phase synchronization in the imaginary part (Lemma~\ref{p and f sync for Im}). Once we achieve this, we demonstrate that the ``whole" complexified Kuramoto system (i.e., $z(t)$) achieves frequency synchronization by using the Lyapunov energy function \eqref{H function} stated in Theorem \ref{F sync}. We also prove that, under the same assumptions, the system further achieves phase synchronization if and only if all natural frequencies coincide (Theorem \ref{3.5}).

In Section \ref{sec 4}, we recast two-oscillator complexified Kuramoto equations as a single complex ODE and construct—on the complex plane punctured at the zeros—a path-defined primitive (a quadrature). Along any trajectory, the real part of this primitive increases linearly in time while the imaginary part remains constant, yielding a natural conserved quantity. This construction applies to arbitrary complex parameters and unifies prior “energy”-type invariants by revealing their common analytic origin. The multivaluedness reflects branch/path choices but does not affect the conserved value along a fixed trajectory. Beyond this, using the residue theorem to evaluate the one-cycle contour integral of $(\omega-\lambda\sin z)^{-1}$, we obtain a closed-form periodic orbit with period $T_{\omega,\lambda}=2\pi/\sqrt{\omega^2-\lambda^2}$ (Theorem \ref{weak N=2}). This complements the quadrature construction by providing an explicit timing law for the cycles.

In Section \ref{sec 4.2}, we analyze the complexified Kuramoto system with $N=3$ oscillators under the ``Cherry flow" ansatz in \cite{maistrenko2004mechanism}. First, we consider the regime $\lambda<\Lambda_c$, in Section \ref{4.2.1}, where we find that in each $2\pi$-period there are two non-real equilibrium points, one a sink and the other a source. The existence of the sink equilibrium futher implies that non-trivial initial conditions exist (i.e., start close to the sink) such that the complexified Kuramoto system archives both full phase-locking and frequency synchronization in this super weak coupling regime. In particular, we develop a simple ``horizontal cutting lemma" (Lemma \ref{lemma 4.5}) to locate two equilibria and analyze their linear stability. 

Finally, when the coupling strength $\lambda$ satisfies $\Lambda_c \leq \lambda < \lambda_c$ (Sections \ref{4.2.2}, \ref{Lambda c lambda lambda c}), we show that there are full phase-locking states in the complexified system. In this paper, we show that as soon as the coupling strength $\lambda$ exceeds $\Lambda_c$ in the complexified Kuramoto system, the aforementioned real semistable equilibria can bifurcate into two equilibria where one is asymptotically stable while the other is unstable, thus demonstrating new synchronization phenomena in the complexified Kuramoto model versus the real one.

\section{The complexified Kuramoto model}\label{sec 2}

\subsection{Preliminaries} \label{sec 2.1}

In this paper, we consider a fully-connected network of $N$ coupled complexified Kuramoto oscillators with $z_n=x_n+iy_n\in\mathbb{C}$ denoting the angle of the $n$th oscillator and $\omega_i$ its natural frequency. We denote the coupling strength by positive real number $\lambda$. Following the complexified Kuramoto model proposed by Th{\"u}mler,  Srinivas, Schr{\"o}der, and Timme \cite{thumler2023synchrony}, we obtain
\begin{align} \label{main eq}
    \dot{z}_n=\omega_n+\frac{\lambda}{N}\sum_{m=1}^N \sin(z_m-z_n),
\end{align}
or equivalently, 
\begin{equation} \label{eq x+iy}
\left\{ \begin{aligned}
&\dot{x}_n=\omega_n+\dfrac{\lambda}{N}\sum_{m=1}^N \sin(x_m-x_n)\cosh(y_m-y_n),~\\ 
&\dot{y}_n=~~~~~~~~\dfrac{\lambda}{N}\sum_{m=1}^N \cos(x_m-x_n)\sinh(y_m-y_n),
\end{aligned} 
\right. 
\end{equation}
for all $n=1,2,\ldots,N$.

It is often useful to introduce a rotating frame by using the changes of the variables
\begin{align}
    z_n \mapsto z_n-t(\omega_1+\ldots+\omega_N)/N.
\end{align}
This observation may allow us to assume 
\begin{align} \label{zero mean omega}
\omega_1+\ldots+\omega_N=0,
\end{align}
without loss of generality. Throughout this paper, we assume that the sum of natural frequencies is zero.

\subsection{Definitions of synchronization} \label{sec 2.2}
In order to get a clearer understanding of physics described in \cite{thumler2023synchrony}, it is important to distinguish different synchronization concepts.

\begin{definition} [Full phase-locking synchronization] \label{def 2}
    A solution of the complexified Kuramoto model \eqref{main eq} achieves full phase-locking if for all $n,m=1,\ldots,N$,
    \begin{align}
        \limsup_{t\rightarrow\infty} |z_n(t)-z_m(t)|<\infty.
    \end{align}
\end{definition}
\begin{definition} [Frequency synchronization] \label{def 3}
    A solution of the  complexified Kuramoto model \eqref{main eq} achieves frequency synchronization if for all $n,m=1,\ldots,N$,
    \begin{align}
        \lim_{t\rightarrow \infty} |\dot{z}_n(t)-\dot{z}_m(t)|=0. 
    \end{align}
\end{definition}
\begin{definition} [Phase synchronization] \label{def 4}
    A solution of the  complexified Kuramoto model \eqref{main eq} achieves phase synchronization if for all $n,m=1,\ldots,N$,
    \begin{align}
        \lim_{t\rightarrow \infty} |z_n(t)-z_m(t)|=0. 
    \end{align}
\end{definition}
\begin{definition} [Critical coupling strength] \label{def 5}
    For the complexified Kuramoto model, two critical coupling strengths are defined as follows:
    \begin{align} \label{eq:lambda_c}
        \lambda_c:= \max_{n,m\in\{1,\ldots,N\}} |\omega_n-\omega_m|,
    \end{align}    
and
\begin{align} \label{eq:Lambda_c}
        \Lambda_c:= \inf_{\lambda}\{\lambda>0: \exists\, x\in\mathbb{R}^N\,\textrm{s.t.}\, \frac{\lambda}{N}\sum_{m=1}^N \sin(x_m-x_n)=-\omega_n
        ~\textrm{for all }n\}.
    \end{align}
\end{definition}

We pause to remark that Definition \ref{def 3} is equivalent to the following condition
\begin{align}
    \lim_{t\rightarrow \infty} |\dot{z}_n(t)|=0,
\end{align}
for all $n=1,\ldots,N$, since we assume the mean of natural frequencies is zero in \eqref{zero mean omega}\footnote{For $n=1,\ldots,N$, we have $ \lim_{t\to\infty} | \dot{z}_n(t) - \frac{1}{N} \sum_{m=1}^N \dot{z}_m(t) | \leq \lim_{t\to\infty} \frac{1}{N} \sum_{m=1}^N | \dot{z}_n(t) - \dot{z}_m(t) | = 0 $ if the solution achieves frequency synchronization. But $\sum_{m=1}^N \dot{z}_m(t) \equiv \sum_{m=1}^N \omega_m = 0.$}. Also, we want to emphasize that in the real Kuramoto model, one can prove that if its solution achieves full phase-locking synchronization, then it also achieves frequency synchronization\footnote{Denoting $H(t)=\sum_{m=1}^N \dot{\theta}_m^2(t)$, from \eqref{first}, we notice that $\int_0^t H(s) ds=\sum_{m=2}^N \omega_m (\theta_m(s)-\theta_1(s))|_0^t+\frac{\lambda}{N}\sum_{1\leq n<m\leq N}\cos(\theta_n(s)-\theta_m(s))|_0^t$. 
Also, we observe that $H(t)$ is uniformly continuous by showing $\dot{H}$ is bounded. Therefore, the solution achieving full phase-locking synchronization implies the boundedness of $|\theta_n(t)-\theta_m(t)|$ for all $n,m=1,...,N$ and all $t>0$, which in turn implies $\lim_{t\rightarrow \infty} H(t)=0$, or equivalently, the solution achieves frequency synchronization.}. However, this is not true in the complexified Kuramoto model, as we will demonstrate in the case for $N=2$ when the coupling strength is ``weak", i.e., $\lambda<\lambda_c=\Lambda_c$. For reference, in the real-valued Kuramoto model, the first critical coupling strength $\lambda_c$ coincides with the critical coupling $K_{\mathrm{critical}}$ defined in \cite[Theorem~4.1]{dorfler2011critical}. The second critical coupling strength $\Lambda_c$ corresponds to $k_c$ in \cite[Definition~2]{verwoerd2008global}, and is also consistent with $K^{(\mathrm{pl})}$ introduced in \cite{thumler2023synchrony}.

It is worth noting that $\lambda_c=\Lambda_c$ when $N=2$, whereas for $N\ge 3$, applying similar arguments in \cite{hsiao2025equivalence} one can show that $\lambda_c\ge \Lambda_c$. In addition, throughout the paper we sometimes refer to the regime $\lambda<\Lambda_c$ as the weak coupling strength, which is consistent with the terminology used in \cite{thumler2023synchrony}.

\section{Coupling strength $\lambda>\lambda_c$} \label{sec 3}
\begin{lemma} [Real part full phase-locking] \label{thm 2}
    Given $\delta\in(0,\pi)$, let the coupling strength $\lambda>\lambda_c/\sin(\delta)$. Let $(z_1(t),\ldots,z_N(t))$ be a (maximal) solution to the complexified Kuramoto model \eqref{main eq} with the initial condition $(x_1(0),\ldots,x_N(0))\in [0,\pi-\delta]^N$ and assume that it exists globally in forward time. Then the solution achieves full phase-locking in the real part; in particular, we have
    \begin{align}
        |x_n(t)-x_m(t)| < \pi - \delta ,
    \end{align}
    for all $n,m=1,\ldots,N$ and $t>0$.
\end{lemma}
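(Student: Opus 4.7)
I would control the diameter of the real parts,
\[
D(t):=\max_{1\leq n\leq N}x_n(t)-\min_{1\leq n\leq N}x_n(t),
\]
which by assumption satisfies $D(0)\leq \pi-\delta$. The plan is to show that at every time where $D(t)=\pi-\delta$ the upper right Dini derivative $D^{+}D(t)$ is strictly negative; a standard first-violation argument then rules out any $t>0$ with $D(t)\geq \pi-\delta$ and, in particular, recovers the strict inequality even in the boundary case $D(0)=\pi-\delta$.

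Fix such a time $t$ and let $M,m$ be indices realising the maximum and minimum of the $x_k(t)$. Using \eqref{eq x+iy},
\[
\dot x_M-\dot x_m=(\omega_M-\omega_m)+\frac{\lambda}{N}\sum_{k=1}^{N}\Big[\sin(x_k-x_M)\cosh(y_k-y_M)-\sin(x_k-x_m)\cosh(y_k-y_m)\Big].
\]
Because $D(t)\leq \pi-\delta<\pi$, each $x_k-x_M\in[-D,0]$ and each $x_k-x_m\in[0,D]$, so $\sin(x_k-x_M)\leq 0$ and $\sin(x_k-x_m)\geq 0$. The key observation is that $\cosh\geq 1$ multiplies quantities of \emph{definite} sign, hence the hyperbolic weights only drive each summand further down:
\[
\sin(x_k-x_M)\cosh(y_k-y_M)-\sin(x_k-x_m)\cosh(y_k-y_m)\leq \sin(x_k-x_M)-\sin(x_k-x_m).
\]
This reduces the complexified estimate to the familiar real-Kuramoto one and is the reason why no assumption on the imaginary data $y_n(0)$ is needed. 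The sum-to-product identity together with $|x_k-(x_M+x_m)/2|\leq D/2<\pi/2$ then gives, for every $k$,
\[
\sin(x_k-x_M)-\sin(x_k-x_m)=-2\sin\!\tfrac{D}{2}\cos\!\Big(x_k-\tfrac{x_M+x_m}{2}\Big)\leq -2\sin\!\tfrac{\pi-\delta}{2}\sin\!\tfrac{\delta}{2}=-\sin\delta.
\]
Summing in $k$ and invoking $\lambda>\lambda_c/\sin\delta$ yields $\dot x_M-\dot x_m\leq \lambda_c-\lambda\sin\delta<0$.

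The remaining technicality is that $D$ is only Lipschitz, since the identity of the extremal indices can change in time; one must therefore argue with a Dini derivative rather than a classical one. A short perturbative calculation shows $D^{+}D(t)=\max_{M}\dot x_M-\min_{m}\dot x_m$, where the extrema are taken over the sets of indices realising the maximum and minimum at time $t$, and the pointwise bound above applies to every admissible pair $(M,m)$ (each having $x_M-x_m=\pi-\delta$), so $D^{+}D(t)<0$ whenever $D(t)=\pi-\delta$. The main conceptual obstacle is the $\cosh$ coupling between real and imaginary parts; fortunately, the inequality $\cosh\geq 1$ combined with the controlled signs of the sines makes the complexification \emph{help} rather than hurt the standard Kuramoto estimate, and the argument closes uniformly in the imaginary data.
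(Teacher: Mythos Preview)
Your proposal is correct and follows essentially the same approach as the paper: a first-violation argument on the real-part diameter, using $\cosh\geq 1$ together with the definite signs of $\sin(x_k-x_M)\leq 0$ and $\sin(x_k-x_m)\geq 0$ to reduce to the real Kuramoto estimate, and then a trigonometric bound yielding $\dot x_M-\dot x_m\leq \lambda_c-\lambda\sin\delta<0$. The only cosmetic differences are that the paper phrases the argument as a contradiction at a first time $t^*$ and handles the boundary case $D(0)=\pi-\delta$ separately, while you package everything via the Dini derivative of $D(t)$; and the paper invokes the elementary inequality $\sin(x-a)-\sin(x)\leq -\sin a$ for $0\leq x\leq a<\pi$ where you use the sum-to-product identity, but both yield the same $-\sin\delta$ bound.
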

\begin{proof}
    In the following, we divide the proof into two cases. 

In the first case, we assume $|x_n(0)-x_m(0)|<\pi-\delta$ for all $n,m=1,\ldots,N$. We claim that 
    \begin{align} \label{bdd of w}
        |x_n(t)-x_m(t)| < \pi-\delta,
    \end{align}
for all $n,m=1,\ldots,N$, for all $t \geq 0$. Suppose, on the contrary, that \eqref{bdd of w} does not hold. It means that there exist the first moment $t^*>0$ such that
    \begin{align} \label{=pi-delta}
        | x_n (t^*) - x_m (t^*) | = \pi-\delta,~\mbox{for}~\mbox{some}~n,m=1,\ldots,N,
    \end{align}
    and 
    \begin{align} \label{<pi-delta}
        \sup_{t\in[0,t^*-\epsilon]} |x_s(t)-x_l(t)|<\pi-\delta,
    \end{align}
    for all $s,l=1,\ldots,N$, for all $0<\epsilon \ll\ 1$.
    We may assume $x_n(t^*)-x_m(t^*)>0$ without loss of generality and notice that at this moment $x_m(t^*)=\min_j x_j(t^*)$ and $x_n(t^*)=\max_j x_j(t^*)$. Hence, by \eqref{=pi-delta} and \eqref{<pi-delta}, it is clear that $\dot{x}_n(t^*)-\dot{x}_m(t^*) \geq 0$. On the other hand, recalling \eqref{eq x+iy}, we see that
    \begin{align*} \notag
        \dot{x}_n(t)-\dot{x}_m(t)=&\;\omega_n-\omega_m+\dfrac{\lambda}{N} \sum_{l=1}^N \Bigg(\sin(x_l-x_n)\cosh(y_l-y_n)\\ \notag
        &~~~~~~~~~~~~~~~~~~~~~~~~~-\sin(x_l-x_m)\cosh(y_l-y_m)\Bigg).
    \end{align*}
    Therefore, through \eqref{=pi-delta}, we obtain that, at $t=t^*$,
\begin{equation} \label{wn-wm<0}
    \begin{aligned} 
        \dot{x}_n -\dot{x}_m =&\;\omega_n-\omega_m+\dfrac{\lambda}{N} \sum_{l=1}^N \Bigg(\sin(x_l-x_m-(\pi-\delta))\cosh(y_l-y_n)\\ 
        &~~~~~~~~~~~~~~~~~~~~~~~~~-\sin(x_l-x_m)\cosh(y_l-y_m)\Bigg)\\ 
        \overset{(a)}{\leq}&\;\lambda_c+\dfrac{\lambda}{N} \sum_{l=1}^N \Bigg(\sin(x_l-x_m-(\pi-\delta))-\sin(x_l-x_m)\Bigg)\\
        \overset{(b)}{\leq}&\;\lambda_c-\lambda \sin(\delta)\\ 
        <& \;0
        ,
\end{aligned}
\end{equation}
where $(a)$ is due to $0 \leq x_l - x_m \leq \pi-\delta$ (this follows from \eqref{=pi-delta}, i.e. $x_m=\min_j x_j$ and $x_n=\max_j x_j$) and $(b)$ is due to the inequality $\sin(x-a)-\sin(x) \leq -\sin(a)$ for $0\leq x \leq a < \pi$. This contradiction then verifies the claim.

For the second case, we assume the initial conditions satisfy $x_n(0)=\pi-\delta$ and $x_m(0)=0$ for some $n,m=1,\ldots,N$. Then the inequality \eqref{wn-wm<0} still holds when $t=0$. Hence, there is some $t_0>0$ such that $ 0 < x_n(t)-x_m(t) < x_n(0)-x_m(0) = \pi-\delta $ for all $t\in (0,t_0]$. (If there are multiple pairs of $(n,m)$ satisfying $x_n(0)=\pi-\delta$ and $x_m(0)=0$, we get $t_0^{(n,m)}>0$ for each pair. Then choose $t_0=\min_{(n,m)} t_0^{(n,m)}>0$.) Then, we can apply the argument in first case to the system \eqref{eq x+iy} with the initial condition at $t=t_0$.

Therefore, the proof is completed.
\end{proof}

Our next goal is to analyze the sufficient condition of phase and frequency synchronization for the imaginary part. In order to control the $y_n-y_m$, we need to further restrict the quantity for $x_n-x_m$ for all $n,m=1,\ldots,N$.

\begin{lemma} [Imaginary part phase and frequency synchronization] \label{p and f sync for Im}
   Suppose there exists a global solution $(z_1(t),\ldots,z_N(t))$ of the complexified Kuramoto model \eqref{main eq} defined on $[0,\infty)$. Suppose that for some $0<\delta_0<\pi/2$, \begin{align} \label{bdd condition for real part}
        \limsup_{t\rightarrow \infty} |x_n(t)-x_m(t)|<\frac{\pi}{2}-\delta_0,
    \end{align} for all $n,m=1,\ldots,N$. Then the imaginary parts synchronize in both phase and frequency, i.e.,
     \begin{align}
        \lim_{t\rightarrow \infty} |y_n(t)-y_m(t)|=0,
    \end{align}
    and
    \begin{align}
        \lim_{t\rightarrow \infty} |\dot{y}_n(t)-\dot{y}_m(t)|=0. 
    \end{align}
    Moreover, each convergence is exponentially fast.
\end{lemma}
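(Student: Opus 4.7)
The plan is to use the hypothesis to convert the imaginary-part equation in \eqref{eq x+iy} into a strict contraction on the diameter $\max_n y_n - \min_n y_n$. First, I would translate the assumption $\limsup_{t\to\infty}|x_n(t)-x_m(t)| < \pi/2 - \delta_0$ into a uniform-in-time lower bound on the cosine weights: there exist $\delta\in(0,\delta_0)$ and $T_0 \geq 0$ such that $|x_n(t)-x_m(t)| \leq \pi/2-\delta$ for every pair $n,m$ and every $t \geq T_0$, and hence $\cos(x_m(t)-x_n(t)) \geq \sin(\delta) > 0$ on $[T_0,\infty)$. This is the only place where the hypothesis enters; the rest of the argument concerns the $y$-dynamics alone.

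Next, I would introduce $D(t) := \max_n y_n(t) - \min_n y_n(t)$ and, for each $t \geq T_0$, pick indices $n_*(t)$, $m_*(t)$ attaining the extremes. At $n_*$, every $\sinh(y_m - y_{n_*}) \leq 0$, and because $\cos(x_m - x_{n_*}) \geq \sin(\delta) > 0$, multiplying flips the inequality:
\begin{equation*}
\cos(x_m - x_{n_*})\sinh(y_m - y_{n_*}) \;\leq\; \sin(\delta)\sinh(y_m - y_{n_*}).
\end{equation*}
Summing and retaining only the $m = m_*$ term, which equals $-\sinh D(t)$, yields
\begin{equation*}
\dot{y}_{n_*}(t) \;\leq\; -\frac{\lambda \sin(\delta)}{N}\,\sinh D(t),
\end{equation*}
and the mirror estimate at $m_*$ gives $\dot{y}_{m_*}(t) \geq \frac{\lambda \sin(\delta)}{N}\,\sinh D(t)$. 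Combining these and using $\sinh r \geq r$ for $r \geq 0$, the upper Dini derivative of $D$ satisfies
\begin{equation*}
D^+ D(t) \;\leq\; \dot{y}_{n_*(t)}(t) - \dot{y}_{m_*(t)}(t) \;\leq\; -\frac{2\lambda\sin(\delta)}{N}\,D(t),
\end{equation*}
so a Gr\"onwall argument delivers $D(t) \leq D(T_0)\,e^{-c(t-T_0)}$ with $c = 2\lambda\sin(\delta)/N$.

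The two conclusions then follow immediately: imaginary-part phase synchronization from $|y_n(t)-y_m(t)| \leq D(t)$, and frequency synchronization from the direct bound $|\dot{y}_n(t)| \leq \lambda \sinh D(t)$ (using only $|\cos|\leq 1$ and $|\sinh(y_m-y_n)|\leq \sinh D(t)$), which gives $|\dot{y}_n-\dot{y}_k| \leq 2\lambda \sinh D(t)$; both decay exponentially. The main obstacle I anticipate is regularity-theoretic rather than algebraic: the indices $n_*(t)$, $m_*(t)$ can jump with $t$, so $D$ is only Lipschitz a priori and the differential inequality has to be understood via the upper Dini derivative (or, equivalently, justified by a Danskin-type envelope argument). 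Once one verifies that $D$ is absolutely continuous and that the Dini inequality integrates to the Gr\"onwall bound, the remainder of the argument is routine manipulation of $\sinh$.
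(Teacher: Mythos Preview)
Your proposal is correct and follows essentially the same strategy as the paper: define the imaginary-part diameter, use the hypothesis to get a uniform lower bound $\cos(x_m-x_n)\geq\sin(\delta)$ after some time $T_0$, derive a differential inequality for the diameter, and apply Gr\"onwall, then read off the $\dot y$ bound from $|\dot y_n|\le\lambda\sinh D$. The only noteworthy differences are that the paper keeps all $N$ terms in the sum and uses the mean value theorem on $\sinh$ to obtain the sharper rate $\lambda\sin(\delta_0)$ (versus your $2\lambda\sin(\delta)/N$), while you are more explicit about handling the Dini derivative when the extremal indices jump; neither point affects the validity or the overall structure of the argument.
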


\begin{proof}
    Let us define the phase difference function
    \begin{align} \label{YY}
        Y(t):=\max_{s,l\in\{1,...,N\}}|y_s(t)-y_l(t)|
    \end{align}
    for $t\geq 0$. It is obvious that $Y(t):\mathbb{R}^+\rightarrow \mathbb{R}$ is a continuous and piecewise smooth function. By \eqref{bdd condition for real part}, there exists a $T>0$ such that $|x_n(t)-x_m(t)|<\pi/2 - \delta_0$ for all $n,m=1,\ldots,N$ and $t>T$. Fix any $t>T$. There exists a pair $(n,m)\in\{1,\ldots,N\}^2$ such that $Y(t)=y_n(t)-y_m(t)$. Recalling \eqref{eq x+iy}, a straightforward calculation for this phase difference reveals that, at time $t$,
    \begin{align*}
        \dot{Y}&=\dot{y}_n-\dot{y}_m\\
        &=\dfrac{\lambda}{N}\sum_{l=1}^N \Bigg(\cos(x_l-x_n)\sinh(y_l-y_n)-\cos(x_l-x_m)\sinh(y_l-y_m)\Bigg)\\
        &\overset{(a)}{\leq} \dfrac{\lambda\cos(\frac{\pi}{2}-\delta_0)}{N}\sum_{l=1}^N \Bigg(\sinh(y_l-y_n)-\sinh(y_l-y_m)\Bigg)\\
        &\overset{(b)}{\leq} \dfrac{\lambda\sin(\delta_0)}{N}\sum_{l=1}^N \Bigg( (y_l-y_n)-(y_l-y_m)\Bigg)\\
        &\leq -\lambda \sin(\delta_0) (y_n-y_m).\\
        &=-\lambda\sin(\delta_0) Y
        ,
    \end{align*}
where $(a)$ is due to $y_m \leq y_l \leq y_n $ for all $l=1,\ldots,N$ and $|x_l-x_k|<\pi/2-\delta_0$ for all $l,k=1,\ldots,N$; $(b)$ is due to the mean-value theorem, $\sinh(b)-\sinh(a)=(b-a)\cosh(c)$ for some $c$ between $a$ and $b$, $y_m-y_n\leq 0$ and $\cosh(\cdot)\geq 1$. With this differential inequality in $Y$, thanks to Gr{\"o}nwall's inequality, we obtain
    \begin{align} \label{exponentially decreasing}
        Y(t)\leq Y(T) \exp(-\lambda\sin(\delta_0) (t-T)),
    \end{align}
    for any $t>T$. This means that the imaginary part achieves phase synchronization exponentially fast, so
    \begin{align}
        \lim_{t\rightarrow \infty} |y_n(t)-y_m(t)|=0,
    \end{align}
    and hence, frequency synchronization due to \eqref{eq x+iy}, and thus
    \begin{align} \label{dot y tends 0}
        \lim_{t\rightarrow \infty} |\dot{y}_n(t)-\dot{y}_m(t)|=0. 
    \end{align}
    This convergence is also exponentially fast since from \eqref{eq x+iy} we have $$ |\dot{y}_n(t)| \leq \lambda \sinh(Y(t)) $$ for $t>T$.
    This completes the proof.
\end{proof}

\begin{theorem} [Full phase-locking] \label{q p locking}
    Consider the coupling strength and initial conditions described in Lemma \ref{thm 2}. Let $(z_1(t),\ldots,z_N(t))$ be a global solution of the complexified Kuramoto model \eqref{main eq}, defined for all $t\geq 0$. Then the solution $(z_1(t),\ldots,z_N(t))$ achieves full phase-locking. 
\end{theorem}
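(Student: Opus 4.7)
The plan is to deduce full phase-locking of the complex-valued solution from the two preceding lemmas, since full phase-locking is just the qualitative assertion that $\limsup_{t\to\infty}|z_n(t)-z_m(t)|<\infty$, and the triangle inequality $|z_n-z_m|\le|x_n-x_m|+|y_n-y_m|$ reduces the task to bounding the two coordinates separately.

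The real coordinate is already handled by Lemma~\ref{thm 2}: under the stated hypotheses, one gets the uniform bound $|x_n(t)-x_m(t)|<\pi-\delta$ for every $t>0$ and every pair $(n,m)$, so in particular $\limsup_{t\to\infty}|x_n(t)-x_m(t)|\le\pi-\delta<\infty$. Thus no further work is needed on the $x$-part, and the whole question is to bound the spread of the imaginary parts.

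For the imaginary coordinate I would invoke Lemma~\ref{p and f sync for Im}, whose hypothesis is exactly the existence of some $\delta_0>0$ with $\limsup_{t\to\infty}|x_n(t)-x_m(t)|<\pi/2-\delta_0$. The natural choice, using Lemma~\ref{thm 2}, is $\delta_0:=\delta-\pi/2$; this is positive exactly in the regime $\delta>\pi/2$, where $\pi-\delta<\pi/2$ and the strong-coupling condition $\lambda>\lambda_c/\sin\delta$ automatically forces the real-part spread to lie strictly below $\pi/2$. With this choice, Lemma~\ref{p and f sync for Im} gives the much stronger conclusion $|y_n(t)-y_m(t)|\to 0$ exponentially, so certainly $\limsup_{t\to\infty}|y_n(t)-y_m(t)|=0<\infty$. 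Combining with the real-part bound yields $\limsup_{t\to\infty}|z_n(t)-z_m(t)|\le\pi-\delta<\infty$ for every pair, which is full phase-locking.

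The main obstacle I anticipate is the range of $\delta$: Lemma~\ref{thm 2} allows $\delta\in(0,\pi)$, but my argument via Lemma~\ref{p and f sync for Im} genuinely uses $\delta>\pi/2$ in order that $\pi-\delta<\pi/2-\delta_0$. If one insists on the full parameter range of Lemma~\ref{thm 2}, an additional bootstrap is required to show that even when $\delta\le\pi/2$ the real-part spread eventually contracts below $\pi/2$; this would presumably come from sharpening the contraction estimate \eqref{wn-wm<0} to produce a strict decrease of $\max_{n,m}|x_n-x_m|$ once the real spread has entered the region where $\cos(x_n-x_m)>0$. Apart from that subtlety, the proof is essentially a one-line concatenation of Lemmas~\ref{thm 2} and~\ref{p and f sync for Im}.
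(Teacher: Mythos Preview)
Your overall strategy is exactly the paper's: reduce to showing $\limsup_{t\to\infty}|x_n(t)-x_m(t)|<\pi/2-\delta_0$ for some $\delta_0>0$, then invoke Lemma~\ref{p and f sync for Im} for the imaginary part. Your treatment of the case $\delta\in(\pi/2,\pi)$ is complete and coincides with the paper's Case~1.

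The gap you yourself flag for $\delta\le\pi/2$ is genuine, and the paper's proof is devoted almost entirely to closing it. What you are missing is not a ``sharpening'' of \eqref{wn-wm<0} but the observation that the \emph{same} estimate already gives uniform contraction: whenever $\delta<x_{\max}(t)-x_{\min}(t)<\pi-\delta$, the computation in \eqref{wn-wm<0} (with $\beta:=x_{\max}-x_{\min}$ in place of $\pi-\delta$) still yields $\frac{d}{dt}(x_{\max}-x_{\min})\le\lambda_c-\lambda\sin\delta<0$, a fixed negative rate. Hence in finite time $T_c\le(\pi-2\delta)/(\lambda\sin\delta-\lambda_c)$ the real spread drops to at most $\delta$; restarting Lemma~\ref{thm 2} at that instant with $\delta':=\pi-\delta$ (note $\sin\delta'=\sin\delta$, so the coupling hypothesis is preserved) traps the spread below $\delta<\pi/2$ forever after. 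The borderline $\delta=\pi/2$ is handled by first picking $\alpha\in(0,\pi/2)$ with $\lambda>\lambda_c/\sin\alpha$ (possible since $\lambda>\lambda_c$) and repeating the contraction argument with $\alpha$ in place of $\delta$. Your suggestion that one should wait until ``$\cos(x_n-x_m)>0$'' is a red herring: the contraction works on the $\dot{x}$-equation via sines and $\cosh\ge1$, and never touches $\cos$.
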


\begin{proof}
By Lemma \ref{p and f sync for Im}, it is sufficient to show $\limsup\limits_{t\rightarrow \infty}|x_n(t)-x_m(t)|$ is bounded by $\pi/2$. In the following, we divide the proof into three cases. 

In the first case, we assume $\delta\in(\pi/2,\pi)$. By Lemma \ref{thm 2}, it is clear that 
\begin{align*}
        |x_n(t)-x_m(t)|< \pi - \delta < \pi/2,
    \end{align*}
for all $n,m=1,\ldots,N$ and $t>0$, so we are done.

For the second case, assume that $\delta\in(0,\pi/2)$. We first note that, if there exists \emph{any} time instant $t^*\geq 0$ such that $|x_n(t^*)-x_m(t^*)| \leq \delta$ for all $n,m=1,\ldots,N$, then applying Lemma \ref{thm 2} to the system re-started at time $t^*$ implies that $|x_n(t)-x_m(t)| \leq \delta < \pi/2$ for all $n,m=1,\ldots,N$ and $t\geq t^*$, thus we will be done.

It remains to show that there indeed exists such a time instant $t^*$. We prove this by contradiction: suppose that for all time $t\geq 0$, $x_\mathrm{max}(t) - x_\mathrm{min}(t) > \delta$, where we define $x_{\mathrm{max}}(t):=\max_{k=1,\ldots,N} x_k(t)$ and $x_{\mathrm{min}}(t):=\min_{k=1,\ldots,N} x_k(t)$.

Fix any time $t\geq 0$. For any $(n,m) \in \arg\max_{k=1,\ldots,N} x_k(t) \times \arg\min_{k=1,\ldots,N} x_k(t)$, we have
\begin{align} \label{eq 3.1415}
    \delta< x_n(t) - x_m(t) = x_{\mathrm{max}}(t)-x_{\mathrm{min}}(t) := \beta < \pi - \delta,
\end{align}
where the last inequality follows from Lemma \ref{thm 2}. For any such $(n,m)$, a straightforward calculation reveals that
\begin{equation} \label{case 2 argument}
\begin{aligned}
        \dot{x}_{n}(t)-\dot{x}_{m}(t)\leq&\;\lambda_c+\dfrac{\lambda}{N} \sum_{l=1}^N \Bigg(\sin(x_l-x_n)\cosh(y_l-y_n)\\
        &~~-\sin(x_l-x_m)\cosh(y_l-y_m)\Bigg)\\
    \overset{(a)}{\leq}&\;\lambda_c+\lambda\max_{x^*\in[\delta,\beta]} \Bigg(\sin(x^*-\beta)-\sin(x^*)\Bigg)\\
    \overset{(b)}{\leq}&\;\lambda_c-\lambda\sin(\delta)\\
    <\;&0.
\end{aligned}
\end{equation}
We pause to remark that $(a)$ is due to \eqref{eq 3.1415}; $(b)$ is due to the function $\sin(x-\beta)-\sin(x)$, defined on $0 \leq x \leq \beta < \pi-\delta$, achieves it maximum $ -\sin(\beta)$ at $x=0$ or $\beta$, and from \eqref{eq 3.1415} we have $\sin \beta > \sin \delta$, where $\beta$ is defined in \eqref{eq 3.1415}.

This indicates that if $x_\mathrm{max}(t)-x_{\mathrm{min}}(t)>\delta$, then it is decreasing with the rate of change at least $\lambda \sin(\delta)-\lambda_c$, which
is a fixed positive value. Thus we have
\begin{align}
    x_\mathrm{max}(t)-x_\mathrm{min}(t) \leq \delta,
\end{align}
for $t\geq T_c:=\frac{\pi-2\delta}{\lambda \sin(\delta)-\lambda_c}>0$, which is a contradiction.

Finally, for the third case, we assume $\delta=\pi/2$. Then there exists $\alpha\in(0,\pi/2)$ such that $\lambda>\lambda_c/\sin(\alpha)>\lambda_c$. Similarly to the second case, if there exists a time instant $t^*>0$ such that $|x_n(t^*)-x_m(t^*)| \leq \alpha $ for all $n,m=1,\ldots,N$, then applying Lemma \ref{thm 2} to the system re-started at time $t^*$ implies that $|x_n(t)-x_m(t)| \leq \alpha < \pi/2$ for all $n,m=1,\ldots,N$ and $t\geq t^*$, thus we will be done. Then we can prove by contradiction that there indeed exists such a time $t^*$, using the same contradiction argument as the second case, with $\alpha$ in place of $\delta$. 

The proof is now complete.
\end{proof}

\begin{theorem} [Frequency synchronization] \label{F sync}
     Consider the coupling strength and initial conditions described in Lemma \ref{thm 2}. Let $(z_1(t),\ldots,z_N(t))$ be a global solution of the complexified Kuramoto model \eqref{main eq}, defined for all $t \geq 0$. Then the solution achieves frequency synchronization.
\end{theorem}

\begin{proof}
    Based on Lemma \ref{p and f sync for Im} and Theorem \ref{q p locking}, the imaginary part of the solution achieves phase and frequency synchronization. It remains to show that the real part also achieves frequency synchronization. We consider an energy function as the following:
    \begin{align} \label{H function}
        \mathcal{H}(t):= \int_0^t \sum_{n=1}^N \dot{x}^2_n(s) \;ds.
    \end{align}
    We notice that $\mathcal{H}(t)$ is an increasing function with respect to $t$. The results proved in Lemma \ref{p and f sync for Im} and Theorem \ref{q p locking} imply that $\dot{x}_n(t)$ and $\ddot{x}_n(t)$ are bounded, for all $n=1,\ldots,N$. Hence, $\sum_{n=1}^N \dot{x}^2_n(t)$ is uniformly continuous. If $\mathcal{H}(t)$ is bounded by a time-independent constant, then 
    \begin{align}
        \lim\limits_{t\rightarrow \infty}\sum_{n=1}^N \dot{x}^2_n(t)=0.
    \end{align}
Recall the first equation of \eqref{eq x+iy} and we arrive at
\begin{align*}
    &\mathcal{H}(t) \\
   =&\int_0^t \left(\sum_{n=1}^N \omega_n \dot{x}_n(s) +\dfrac{\lambda}{N} \sum_{n,m=1}^N \sin(x_m(s)-x_n(s))\cosh(y_m(s)-y_n(s)) \dot{x}_n(s)\right)\;ds\\
   =&\sum_{n=1}^N \omega_n(x_n(t)-x_n(0))\\
    &-\dfrac{\lambda}{N}\sum_{1\leq m<n \leq N} \int_0^t \sin(x_n(s)-x_m(s))\cosh(y_m(s)-y_n(s))(\dot{x}_n(s)-\dot{x}_m(s))\; ds\\
    \leq&\left|\sum_{n=1}^N \omega_n(x_n(t)-x_n(0))\right|
    \\
    &+
    \dfrac{\lambda}{N}\left|\sum_{1\leq m<n \leq N}(\cos(x_n(s)-x_m(s))\cosh(y_m(s)-y_n(s))\Big|_{0}^{t}\right|\\
    &+\dfrac{\lambda}{N}\left|\sum_{1\leq m<n \leq N} \int_0^t \cos(x_n(s)-x_m(s))\sinh(y_m(s)-y_n(s))(\dot{y}_{m}(s)-\dot{y}_n(s)) \; ds\right|\\
    =:& ~\mathcal{I}(t)+\mathcal{II}(t)+\mathcal{III}(t).
\end{align*}
We want to demonstrate that $\limsup_{t\rightarrow \infty} (\mathcal{I}(t)+\mathcal{II}(t)+\mathcal{III}(t))$ is bounded by a constant. It is not difficult to directly observe that $\limsup_{t\rightarrow \infty}\mathcal{II}(t)$ is bounded since cosine is bounded by $1$ and $\limsup_{t\rightarrow \infty} \cosh(y_m(t)-y_n(t))=\cosh(0)=1$ due to Lemma \ref{p and f sync for Im} and Theorem \ref{q p locking}. Recalling the zero mean assumption for natural frequencies in \eqref{zero mean omega}, we can obtain the bounds for $\mathcal{I}(t)$ by
\begin{align*}
    \mathcal{I}(t)\leq \sum_{n=2}^N \left|\omega_n\right||\left|(x_n(t)-x_1(t))\right|+\sum_{n=2}^N \left|\omega_n\right|\left|(x_n(0)-x_1(0))\right|.
\end{align*}
By Lemma \ref{thm 2}, we obtain that $\limsup_{t\rightarrow \infty}\mathcal{I}(t)$ is bounded.

It remains to verify that $\limsup_{t\rightarrow \infty} \mathcal{III}(t)$ is bounded. Recalling Lemma \ref{p and f sync for Im} and Theorem \ref{q p locking} and \eqref{dot y tends 0}, there exists a $L>0$ and $T_0>0$ such that 
\begin{align} \label{v_mm dot is bdd}
   |\dot{y}_n(t)-\dot{y}_m(t)|<L,
\end{align}
for all $n,m=1,\ldots,N$ and $t>T_0$. Therefore, combining \eqref{v_mm dot is bdd}, Lemma \ref{p and f sync for Im} and Theorem \ref{q p locking}, we obtain
\begin{equation} \label{integration of sinh}
    \begin{aligned} 
    \limsup_{t\rightarrow \infty}\mathcal{III}(t)\leq & \dfrac{\lambda}{N} \sum_{1\leq m<n\leq N}\int_0^{T_0} \left|\sinh(y_m(s)-y_n(s))| |\dot{y}_{m}(s)-\dot{y}_n(s)\right|\; ds\\ 
    &+\dfrac{\lambda L}{N} \sum_{1\leq m<n\leq N}\int_{T_0}^\infty \sinh(\left|y_m(s)-y_n(s)\right|)\; ds.
\end{aligned}
\end{equation}

Due to Lemma \ref{p and f sync for Im}, we know that the imaginary part of the solution achieves phase synchronization exponentially fast. Therefore, this implies that the second integration \eqref{integration of sinh} is bounded, and hence $\limsup_{t \rightarrow \infty}\mathcal{III}(t)$ is bounded. 

To recapitulate, we have proven that $\mathcal{H}(t)$ is bounded by a constant which is independent of $t\in\mathbb{R}^+$, so the solution $(z_1(t),\ldots,z_N(t))$ of the complexified Kuramoto model \eqref{main eq} achieves frequency synchronization. This proof is now complete.
\end{proof}

\begin{theorem} [Phase synchronization if and only if identical natural frequencies] \label{3.5}
     Consider the coupling strength and initial conditions described in Lemma \ref{thm 2}.
    Let $(z_1(t),\ldots,z_N(t))$ be a global solution of the complexified Kuramoto model \eqref{main eq}, defined for all $t \geq 0$.
    Then the solution $(z_1(t),\ldots,z_N(t))$ of the complexified Kuramoto model  \eqref{main eq} achieves phase synchronization if and only if $\omega_1=\ldots=\omega_N$.
\end{theorem}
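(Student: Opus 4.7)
The plan is to prove the two directions separately.

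For the ``only if'' direction, I assume the solution achieves phase synchronization, i.e., $|z_n(t) - z_m(t)| \to 0$ for all $n,m$. Subtracting the equations in \eqref{main eq} gives $\dot z_n - \dot z_m = (\omega_n - \omega_m) + \frac{\lambda}{N}\sum_{k=1}^N[\sin(z_k - z_n) - \sin(z_k - z_m)]$. Since the complex sine is continuous at the origin and the sum has only $N$ terms, every summand tends to $0$, so $\dot z_n - \dot z_m \to \omega_n - \omega_m$. On the other hand, Theorem \ref{F sync} applies under the same hypotheses and yields $|\dot z_n - \dot z_m| \to 0$. By uniqueness of limits, $\omega_n = \omega_m$ for every $n, m$.

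For the ``if'' direction, I would use the zero-mean normalization \eqref{zero mean omega} to reduce to $\omega_1 = \cdots = \omega_N = 0$. Lemma \ref{p and f sync for Im} together with Theorem \ref{q p locking} already tell us that $Y(t) := \max_{n,m}|y_n(t) - y_m(t)|$ decays exponentially and that there exist $T > 0$ and $\delta_0 \in (0, \pi/2)$ with $|x_n(t) - x_m(t)| < \pi/2 - \delta_0$ for all $t \ge T$, so only the real-part phase synchronization remains. I would introduce the spread $X(t) := \max_n x_n(t) - \min_n x_n(t)$, a nonnegative Lipschitz function, and estimate its upper Dini derivative at a generic time $t > T$: choosing indices $n, \ell$ attaining the max and the min, one has $D^+ X(t) \le \dot x_n(t) - \dot x_\ell(t) = \frac{\lambda}{N}\sum_k[\sin(x_k - x_n)\cosh(y_k - y_n) - \sin(x_k - x_\ell)\cosh(y_k - y_\ell)]$.

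Splitting $\cosh(\cdot) = 1 + (\cosh(\cdot) - 1)$ separates this expression into a main term and a small correction. For the main term, the identity $\sin(a) - \sin(a + X) = -2\sin(X/2)\cos(a + X/2)$ applied with $a = x_k - x_n \in [-X, 0]$ and $X < \pi/2$ gives $\sin(x_k - x_n) - \sin(x_k - x_\ell) \le -\sin(X(t))$ for every $k$, so the main term is bounded above by $-\lambda\sin(X(t))$. The correction is bounded in absolute value by $C\sin(X(t))(\cosh(Y(t)) - 1)$, which is negligible for large $t$ thanks to the exponential decay of $Y(t)$ from Lemma \ref{p and f sync for Im}. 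Combining yields $D^+ X(t) \le -\tfrac{\lambda}{2}\sin(X(t))$ for $t$ sufficiently large, and since $X(t) < \pi/2 - \delta_0$ one has $\sin X(t) \ge c_0 X(t)$ for some $c_0 > 0$. Gronwall's inequality then forces $X(t) \to 0$ exponentially, and combined with $Y(t) \to 0$ this gives $|z_n(t) - z_m(t)| \to 0$.

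The main obstacle I expect is the careful bookkeeping of the $\cosh$ correction and the standard-but-delicate Dini-derivative treatment at the (generically isolated) instants where the indices attaining the max or min swap; these steps are technical rather than conceptual.
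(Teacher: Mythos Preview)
Your proposal is correct. The ``only if'' direction matches the paper's argument essentially verbatim: both invoke Theorem~\ref{F sync} for $\dot z_n-\dot z_m\to 0$ and use phase synchronization to kill the sine terms, forcing $\omega_n=\omega_m$.

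For the ``if'' direction your route is genuinely different from the paper's. The paper proceeds indirectly: it first invokes Theorem~\ref{F sync} to obtain $\dot x_n\to 0$, then observes that with $\omega_n=0$ and $\cosh(y_m-y_n)\to 1$ the first equation of \eqref{eq x+iy} forces $\sum_m\sin(x_m-x_n)\to 0$; combined with the half-arc bound $|x_m-x_n|<\pi/2$ this pins down $x_m-x_n\to 0$ (the paper states this last step without detail). You instead bypass Theorem~\ref{F sync} entirely and run a direct Gr\"onwall argument on the real-part spread $X(t)$, treating the $\cosh$ factors as a perturbation of the identical-oscillator real Kuramoto flow. Your approach is more self-contained and yields an explicit exponential rate for $X(t)\to 0$, at the cost of the extra bookkeeping you flag (the $\cosh$ correction and the Dini-derivative formalism at index swaps, both of which are routine). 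The paper's approach is shorter once Theorem~\ref{F sync} is in hand, but is sketchier on the final step and does not give a rate.
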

\begin{proof}
    First, we show that if $\omega_1=\ldots=\omega_N$, then the solution achieves phase synchronization. Recall that we assume $\sum_{n=1}^N \omega_n=0$; hence $\omega_1=\ldots=\omega_N=0$. By Lemma \ref{thm 2} and Lemma \ref{p and f sync for Im}, we know that the differences between imaginary parts, i.e., $\{y_n(t)-y_m(t)\}_{1\leq n,m\leq N}$, all tend to zero exponentially fast as $t\to\infty$ via \eqref{exponentially decreasing}. Also, proofs of Theorem \ref{q p locking} and Theorem \ref{F sync} imply that the differences between real parts, i.e., $\{x_n(t)-x_m(t)\}_{1\leq n,m\leq N}$, are bounded by $\pi/2$ and that the real parts achieve frequency synchronization. Considering all these and taking the limit $t\to\infty$ in the first equation of \eqref{eq x+iy}, we can show that the solutions achieve phase synchronization. 

    Second, we show that if the solution achieves phase synchronization, then the natural frequencies must coincide; since we assume $\sum_{n=1}^N \omega_n=0$, it suffices to show that $\omega_1=\ldots=\omega_N=0$. For any $n=1,\ldots,N$, taking $t\to\infty$ in the real-part equation in \eqref{main eq} yields $\omega_n=0$, using Theorem \ref{F sync} for the left-hand side, and the phase synchronization assumption on the right-hand side. 
    
    This proof is now complete.
\end{proof}

\begin{remark}
Throughout the above results we have assumed that the solution exists globally in forward time (that is, no finite-time blow-up occurs). 
However, for the complexified Kuramoto model the issue of finite-time blow-up is rather subtle; as discussed in the Introduction and Appendix~\ref{appen:finite-time_blow-up}, one can even construct explicit solutions with finite-time blow-up for certain initial data. 
In what follows we investigate sufficient conditions ensuring the global-in-time existence of solutions.
\end{remark}

\begin{theorem}
Let $\delta \in (\pi/2,\pi)$ and suppose $\lambda > \lambda_c/\sin(\delta)$. 
Consider the complexified Kuramoto model \eqref{main eq} with initial phases satisfying 
\[
|x_n(0)-x_m(0)| < \pi - \delta, \quad \text{for all } n,m=1,\ldots,N. 
\]
Then the corresponding solution $z(t)=(z_1(t),\ldots,z_N(t))$ exists globally in forward time and achieves both full phase-locking and frequency synchronization. In addition, the solution achieves phase synchronization if and only if the natural frequencies are identical, i.e.,
\(\omega_1=\cdots=\omega_N\).
\end{theorem} 

\begin{proof}
Define $\zeta_n := z_n - z_1$ and $\tilde{\omega}_n := \omega_n - \omega_1$ for $n=1,2,\ldots,N$. 
Then the system \eqref{main eq} can be rewritten as
\begin{align} \label{zeta}
\dot{\zeta}_n = \tilde{\omega}_n + \frac{\lambda}{N}\sum_{m=1}^N \Big(\sin(\zeta_m-\zeta_n)-\sin(\zeta_m)\Big), \quad n=2,\ldots,N.
\end{align}
By the local existence and uniqueness theorem, there exists $\tilde{T}>0$ such that the initial value problem \eqref{zeta} admits a unique solution $\zeta:=(\zeta_2,\ldots,\zeta_N)$ on $[0,\tilde{T}]$. 
Let
\begin{align} \label{T star}
T^* := \sup\{\, t>0 : \exists\, \zeta:[0,t)\to \mathbb{C}^{N-1} \text{ solving \eqref{zeta}}\,\}.
\end{align}

Suppose for contradiction that $T^*<\infty$. 
By the blow-up alternative, this would imply
$\lim_{t\uparrow T^*} |\zeta(t)| = \infty$. 
However, within the interval of existence $[0,T^*)$, one can apply the arguments from Lemma \ref{thm 2} and Lemma \ref{p and f sync for Im} to deduce uniform bounds:
\[
|\mathfrak{Re}(\zeta_n(t))| = |x_n(t)-x_1(t)| < \pi-\delta, 
\qquad
|\mathfrak{Im}(\zeta_n(t))| = |y_n(t)-y_1(t)| \le Y(0),
\]
for all $t<T^*$. 
This contradicts the blow-up alternative, hence $T^*=\infty$ and $\zeta$ exists globally. 
Since $z_1+\cdots+z_N\equiv C$ for some constant $C\in\mathbb{C}$, the same conclusion holds for $z_1$, and consequently for $z=(z_1,\ldots,z_N)$. 

Global existence being established, we can now invoke Theorem \ref{q p locking} and Theorem \ref{F sync} to conclude that $z$ achieves both full phase-locking and frequency synchronization. 
Finally, by Theorem \ref{3.5}, the solution achieves phase synchronization if and only if $\omega_1=\cdots=\omega_N$. 
\end{proof}

\begin{remark} 
    One can also study the orbital stability~\cite{choi2012asymptotic, ryoo2025asymptotic} of the complexified Kuramoto model in similar settings. We leave this as future work.
\end{remark}

\subsection{Numerical results}
\label{subsec:numerical}
In this subsection, we provide some numerical results to support our analytical conclusions. 

In Fig.~\ref{fig:strong}, we demonstrate the time evolution of $N=5$ oscillators in the complexified Kuramoto model, for the time duration $0\leq t\leq 50$. We randomly sample $N=5$ natural frequencies from the standard normal distribution, then center them so that $\sum_{n=1}^N \omega_N=0$, satisfying the assumption \eqref{zero mean omega}. We also scale them so that their range is $1$, thus $\lambda_c=1$ by Def. \eqref{def 5}. The realization of the natural frequencies for Fig.~\ref{fig:strong} is, listed as a vector, $\vec{\omega} := (\omega_1,\ldots,\omega_5) = (-0.14, -0.20, -0.32, -0.02, 0.68)$. Also, we choose the coupling strength $\lambda=1.1$ and the parameter $\delta$ for initial conditions as $\delta=\pi/2$ (see Lemma~\ref{thm 2}), so that $\lambda>\lambda_c/\sin(\delta)$ is satisfied, i.e., by definition we are in the \emph{strong coupling} regime. The initial conditions for the real and imaginary parts are sampled independently and uniformly at random from $[0,\pi-\delta]$. Note that for the premises of Lemma~\ref{thm 2} through \ref{p and f sync for Im} and Theorem~\ref{q p locking} through \ref{F sync} to hold, we only need the real parts to initially lie in $[0,\pi-\delta]$. The realization of the initial conditions in Fig.~\ref{fig:strong} are, for the real part, $\vec{x}(0) = (0.85, 0.36, 0.62, 1.10, 0.33)$, and for the imaginary part, $\vec{y}(0)=(1.18, 0.66, 0.39, 1.53, 1.30).$ 

In Fig.~\ref{fig:strong_omega_zero}, the time evolution of $N=5$ oscillators with identical natural frequencies in the complexified Kuramoto model, for the time duration $0\leq t\leq 50$. In particular, the natural frequencies $\vec{\omega} = (0,0,0,0,0)$ by the assumption \eqref{zero mean omega}. The other parameters and the initial conditions are chosen to be the same as those in Fig.~\ref{fig:strong}.

\begin{figure}[H]
    \centering
    \subfloat[Trajectories $\{(x_n(t),y_n(t))\}_{1\leq n\leq 5}$ for the time duration $0\leq t\leq 50$. Frequency synchronization in the imaginary part can be observed, consistent with Lemma \ref{p and f sync for Im}.\label{fig:strong_trajectory}]
        {\includegraphics[width=0.45\textwidth]{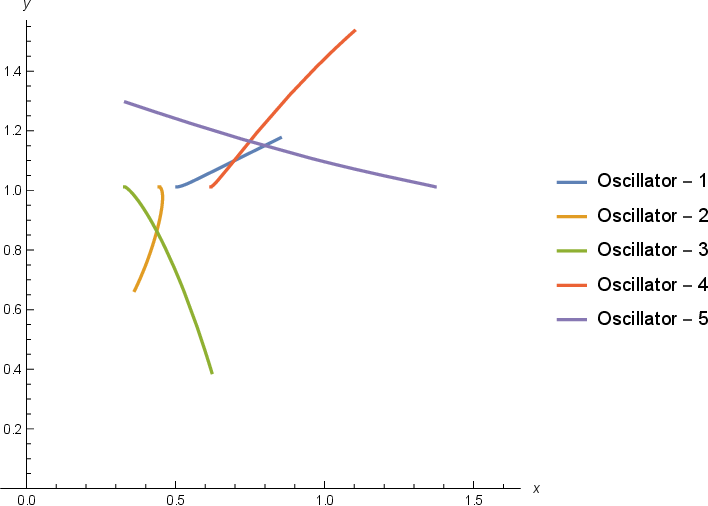}}
    \hfill
    \subfloat[$\{|x_n(t)-x_m(t)|\}_{1\leq m<n\leq N}$. Full phase-locking synchronization in the real part can be observed, consistent with Lemma \ref{thm 2}.\label{fig:strong_x}]
         {\includegraphics[width=0.45\textwidth]{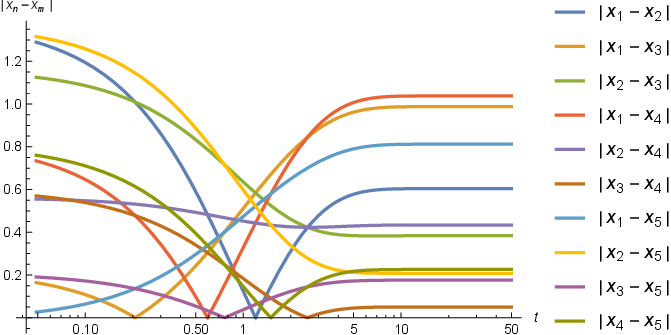}}
    \hfill
    \subfloat[$\{|y_n(t)-y_m(t)|\}_{1\leq m<n\leq N}$. Phase synchronization in the imaginary part can be observed, consistent with Lemma \ref{p and f sync for Im}. \label{fig:strong_y}]
         {\includegraphics[width=0.45\textwidth]{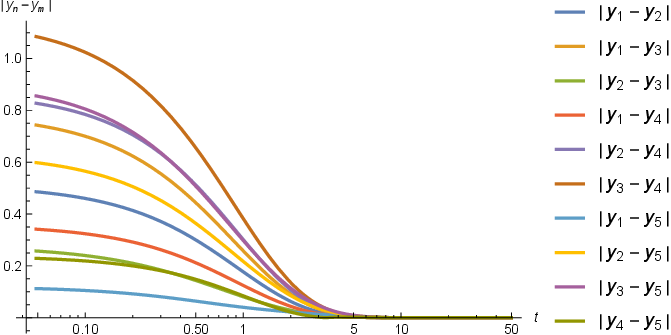}}
    \hfill
    \subfloat[$\{|\dot{y}_n(t)-\dot{y}_m(t)|\}_{1\leq m<n\leq N}$. Frequency synchronization in the imaginary part can be observed, consistent with Lemma \ref{p and f sync for Im}. \label{fig:strong_ydot}]
        {\includegraphics[width=0.45\textwidth]{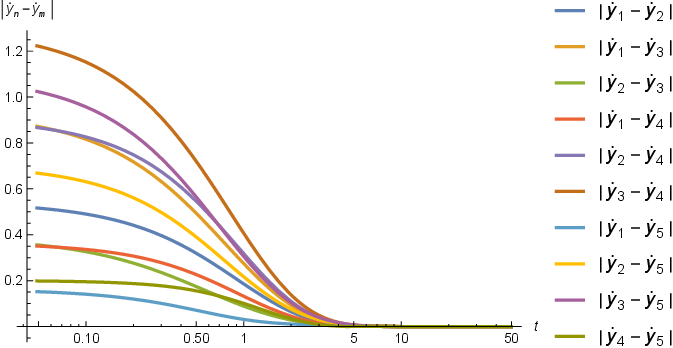}}
    \hfill
    \subfloat[$\{|z_n(t)-z_m(t)|\}_{1\leq m<n\leq N}$. Full phase-locking synchronization can be observed, consistent with Theorem \ref{q p locking}. \label{fig:strong_z}]
    {\includegraphics[width=0.45\textwidth]{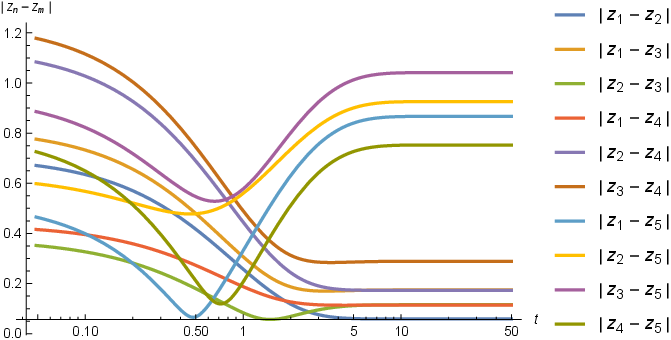}}
    \hfill
    \subfloat[$\{|\dot{z}_n(t)-\dot{z}_m(t)|\}_{1\leq m<n\leq N}$. Frequency synchronization can be observed, consistent with Theorem \ref{F sync}. \label{fig:strong_z_dot}]
    {\includegraphics[width=0.45\textwidth]
    {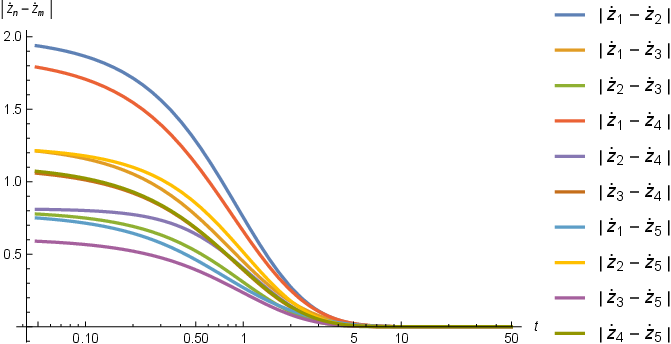}}

    \caption{Time evolution for $N=5$ oscillators under strong coupling. See Subsection~\ref{subsec:numerical} for parameters and initial conditions.}
    \label{fig:strong}
\end{figure}

\begin{figure}[H]
    \centering
    \subfloat[Trajectories $\{(x_n(t),y_n(t))\}_{1\leq n\leq 5}$ for the time duration $0\leq t\leq 50$. Phase synchronization can be observed, consistent with Theorem \ref{3.5} (and Theorem \ref{q p locking}). \label{fig:strong_trajectory_omega_zero}]
        {\includegraphics[width=0.45\textwidth]{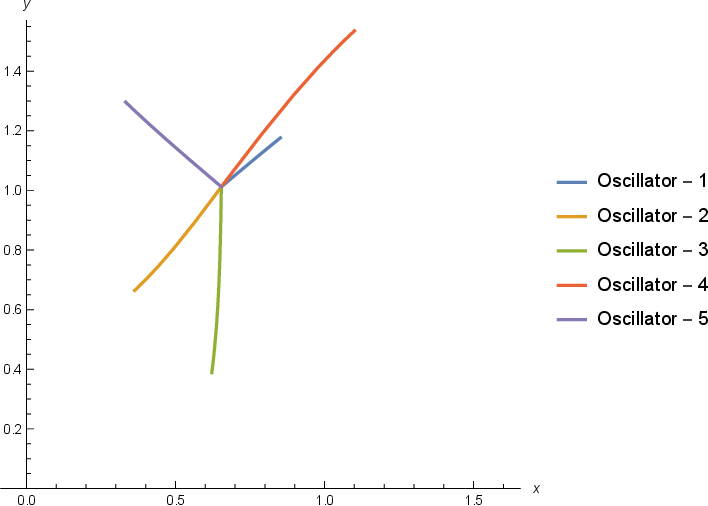}}
    \hfill
    \subfloat[$\{|x_n(t)-x_m(t)|\}_{1\leq m<n\leq N}$. Full phase-locking synchronization in the real part can be observed, consistent with Lemma \ref{thm 2}.\label{fig:strong_x_omega_zero}]
         {\includegraphics[width=0.45\textwidth]{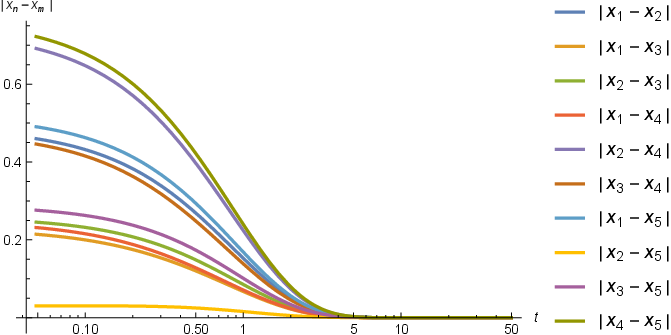}}
    \hfill
    \subfloat[$\{|y_n(t)-y_m(t)|\}_{1\leq m<n\leq N}$. Phase synchronization in the imaginary part can be observed, consistent with Lemma \ref{p and f sync for Im}. \label{fig:strong_y_omega_zero}]
         {\includegraphics[width=0.45\textwidth]{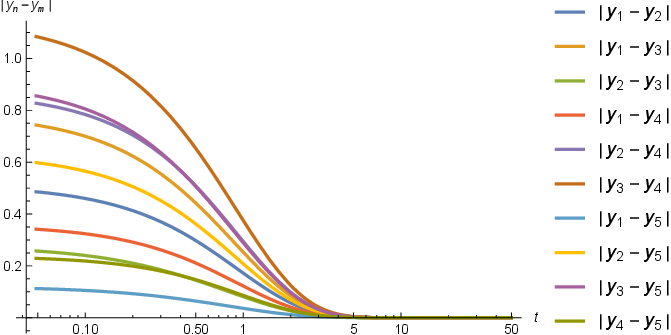}}
    \hfill
    \subfloat[$\{|\dot{y}_n(t)-\dot{y}_m(t)|\}_{1\leq m<n\leq N}$. Frequency synchronization in the imaginary part can be observed, consistent with Lemma \ref{p and f sync for Im}. \label{fig:strong_ydot_omega_zero}]
        {\includegraphics[width=0.45\textwidth]{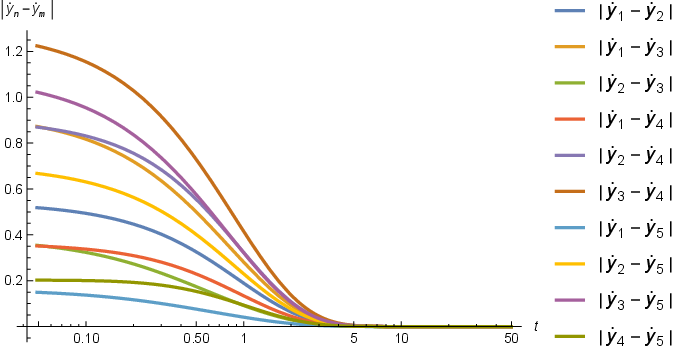}}
    \hfill
    \subfloat[$\{|z_n(t)-z_m(t)|\}_{1\leq m<n\leq N}$. Phase synchronization can be observed, consistent with Theorem \ref{3.5} (and Theorem \ref{q p locking}).\label{fig:strong_z_omega_zero}]
    {\includegraphics[width=0.45\textwidth]{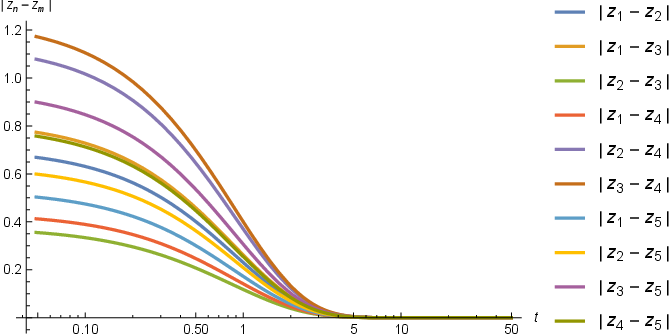}}
    \hfill
    \subfloat[$\{|\dot{z}_n(t)-\dot{z}_m(t)|\}_{1\leq m<n\leq N}$. Frequency synchronization can be observed, consistent with Theorem \ref{F sync}. \label{fig:strong_z_dot_omega_zero}]
    {\includegraphics[width=0.45\textwidth]
    {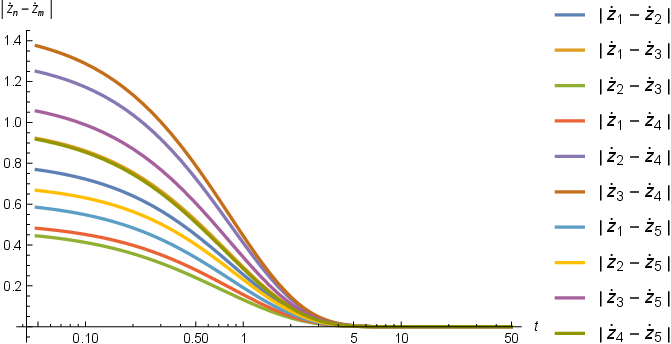}}
    
    \caption{Time evolution for $N=5$ oscillators with identical natural frequencies ($\boldsymbol{\omega}=\boldsymbol{0}$) under strong coupling. See Subsection~\ref{subsec:numerical} for parameters and initial conditions.}
    \label{fig:strong_omega_zero}
\end{figure}

\section{Two oscillators} \label{sec 4}

In this section, we analytically study synchronization in the complexified Kuramoto model with two oscillators. Let $z := z_1 - z_2 $ and $ \omega := \omega_1 - \omega_2 = 2\omega_1 $ in~\eqref{main eq} with $ N = 2 $. Then $ z$ satisfies the complex ODE  
\begin{align} 
    \label{n=2 complex}
    \dot{z} = f(z) := \omega - \lambda \sin(z).
\end{align}
The dynamics of~\eqref{n=2 complex} has been examined in~\cite{thumler2023synchrony} for real parameters and in~\cite{lee2024complexified} for complex ones. Both works identified a conserved ``energy'' function (cf.~\cite[Eq.~(5)]{thumler2023synchrony}, \cite[Eq.~(17)]{lee2024complexified}) within certain parameter regimes, though their forms appear different. Here we provide a unified construction of a conserved quantity valid for arbitrary \(\lambda,\omega \in \mathbb{C}\) with $\lambda\neq 0$. The method works, in fact, for any holomorphic function defined on the punctured complex plane—that is, on the complex plane with all points where the function vanishes removed. Inspired by
flow-box theorem for holomorphic vector fields, we have theorem as follows.

\begin{theorem} \label{local rectification thm}
Let $ \mathtt{f}: U \to \mathbb{C} $ be a holomorphic function, and suppose $ \mathtt{f}(z_0) \neq 0 $.
Then there exists a simply connected open neighborhood $ V \subset U $ of $ z_0 $
and a holomorphic function $ \mathscr{F}: V \to \mathbb{C} $ such that $ \mathscr{F}'(z) = 1/\mathtt{f}(z)$ for all $z\in V$.
Let $ z(t) $ be any (locally defined) solution of
\begin{align*}
\dot z = \mathtt{f}(z), \qquad z(t) \in V.
\end{align*}
Then
\begin{enumerate}
    \item \label{111}$ \dfrac{d}{dt} \mathscr{F}(z(t)) = 1 $, hence $ \mathscr{F}(z(t)) = t + C $.
    \item \label{222} The imaginary part $ I(z) := \mathfrak{Im} \mathscr{F}(z) $ satisfies
    \begin{align*}
        \frac{d}{dt} I(z(t)) = 0,
    \end{align*}
    i.e. $I(z)$ is constant along each trajectory.
    \item \label{333}Geometrically, the integral curves of the vector field \( \dot z = \mathtt{f}(z) \)
    in \( V \) are precisely the level curves
    \[
    \{ z \in V : \mathfrak{Im} \mathscr{F}(z) = \text{constant} \}.
    \]
\end{enumerate}
\end{theorem}

\begin{proof}
Since $ \mathtt{f}(z_0) \neq 0 $ and $ \mathtt{f} $ is continuous, there exists an open disk
$ V_0 \ni z_0 $ on which $ \mathtt{f}(z) \neq 0 $. Shrinking if necessary, choose a
simply connected open subset $ V \subset V_0 $.
On $ V $, the function $ 1/\mathtt{f} $ is holomorphic, so there exists a holomorphic
primitive $\mathscr{F}$ satisfying $ \mathscr{F}'(z) = 1/\mathtt{f}(z) $.
Because $ \mathscr{F}'(z) \neq 0 $, by the holomorphic inverse function theorem,
$ \mathscr{F}: V \to \mathscr{F}(V) $ is a biholomorphism after possibly shrinking $V$.

Now take any solution $ z(t)$ of $ \dot z = \mathtt{f}(z) $ with values in $ V $,
and set $ w(t) := \mathscr{F}(z(t)) $. Then by the chain rule,
\begin{align*}
\dot w(t) = \mathscr{F}'(z(t)) \dot z(t)
           = \frac{1}{\mathtt{f}(z(t))} \mathtt{f}(z(t))
           = 1.
\end{align*}
Hence $ w(t) = t + C $, or equivalently $ \mathscr{F}(z(t)) = t + C $.
Taking the imaginary part gives
\begin{align*}
\mathfrak{Im} \mathscr{F}(z(t)) = \mathfrak{Im} C = \text{constant},
\end{align*}
which proves \ref{111} and \ref{222}.

Because $\mathscr{F}$ is conformal, the change of variable $ w = \mathscr{F}(z) $
transforms the system into the constant vector field $ \dot w = 1 $,
whose integral curves in the $ w $-plane are horizontal lines
$ \{\mathfrak{Im} \ w = \text{constant}\} $.
The inverse image under $ \mathscr{F}^{-1} $ of these horizontal lines
are exactly the level curves of $ \mathfrak{Im} \mathscr{F} $,
which coincide with the trajectories of $ \dot z = \mathtt{f}(z) $ in $ V $.
This proves \ref{333}. Finally, existence and uniqueness of the local flow
follow from the Picard–Lindelöf theorem,
since $\mathtt{f}$ is analytic (hence locally Lipschitz) on $ V $.
Uniqueness ensures that each level curve of $ \mathfrak{Im}\mathscr{F} $
corresponds to a unique trajectory.
\end{proof}

\subsection{Coupling strength $0<\lambda<\omega$} \label{sec: 4.1}

Define $x=x_1-x_2$ and $y=y_1-y_2$. Recall \eqref{eq x+iy} and we write
\begin{equation} \label{n=2}
\left\{ \begin{aligned}
    \dot{x}=& \;\omega-\lambda\sin(x)\cosh(y) := f_1(x,y) ,\\
    \dot{y}=&~~~-\lambda\cos(x)\sinh(y) := f_2(x,y) .
\end{aligned}
\right.
\end{equation}
We pause to remark that it is impossible to arrive at phase-locking synchronization in the real $N=2$ Kuramoto model with a weak coupling ($\lambda<\omega$) because there are no fixed points. However, fixed points exist for the complexified Kuramoto model. 

A direct analysis of the equilibirum points of system~\eqref{n=2} on $\mathbb{R}^2$, which are equivalently the equilibrium points of system~\eqref{n=2 complex} on $\mathbb{C}$, i.e., the zero set $\mathcal{Z}$, reveals that
\begin{align*}
    \mathcal{Z}
    =
    \left\{
    2 k \pi + \frac{\pi}{2} \pm i \cosh^{-1}\left( \frac{\omega}{\lambda} \right)
    :
    k \in \mathbb{Z}
    \right\}
    .
\end{align*}
Recalling \eqref{n=2 complex}, one may observe that $\operatorname{Res}\!\big(1/f,z_k\big)\in i\mathbb{R}$ for all $z_k\in\mathcal{Z}$ (equivalently, every period $ \oint 1/f \ dz$ over a closed loop is real). Therefore, combining Theorem~\ref{local rectification thm} with the “real periods” gluing, if $\widetilde{\mathscr{F}}'=1/f$ then $\mathfrak{Im} \widetilde{\mathscr{F}}$ is single-valued and continuous on $\mathbb{C}\setminus \mathcal{Z}$; equivalently, $\mathfrak{Im}\int_\gamma 1/f \ dz$ is independent of the path $\gamma$. Hence $\widetilde{I}(z):=\mathfrak{Im}\widetilde{\mathscr{F}}(z)$ defines a global conserved quantity on $\mathbb{C}\setminus\mathcal{Z}$, and its level sets $\{\widetilde{I}=\mathrm{const}\}$ coincide with the trajectories of $\dot z=f(z)$. To derive an explicit form of a conserved quantity, let us calculate
\begin{align*}
      \widetilde{\mathscr{F}}(z)=\int_\gamma \frac{\mathrm{d}\zeta}{\omega-\lambda\sin(\zeta)}  
\end{align*}
along the the path $\gamma$ consisted of two segments: from $0$ to $x$ along the real axis, and then from $x+i0=x$ to $x+iy=z$ parallel to the imaginary axis. A straightforward calculation reveals that
\begin{align}
    \mathfrak{Im} \widetilde{\mathscr{F}}(z)
    &=
    \frac{1}{\sqrt{\omega^2-\lambda^2}}
    \log \left(
    \frac{ \omega \cosh(y) - \lambda \sin(x) + \sqrt{\omega^2-\lambda^2} \sinh(y)  }
    {
    \omega \cosh(y) - \lambda \sin(x) - \sqrt{\omega^2-\lambda^2} \sinh(y)
    }
    \right)
    .
\end{align}
Note that for $z=x+iy\notin\mathcal{Z}$, the ratio
\begin{align}
    C(x,y):=
     \frac{ \omega \cosh(y) - \lambda \sin(x) + \sqrt{\omega^2-\lambda^2} \sinh(y)  }
    {
    \omega \cosh(y) - \lambda \sin(x) - \sqrt{\omega^2-\lambda^2} \sinh(y)
    }
\end{align}
is well-defined and positive, as both the nominator and denominator are positive. Hence, by the general argument in Sec.~\ref{sec 4} above for the existence of a conserved quantity, $\mathfrak{Im} \widetilde{\mathscr{F}}(z)$, and hence $C(x,y)$, is a conserved quantity along solution trajectories.

A few observations are in order. First, by local existence and uniqueness together with Lipschitz estimates of $f'(z)$ on an arbitrary horizontal strip $|\mathfrak{Im}~z|\leq B$ with $B>0$, any solution to~\eqref{n=2 complex} with \emph{real} initial conditions remains real for \emph{all} time. Since $C(x,y)=1$ if and only if $y=0$, the conservation of $C$ along any \emph{real} solutions reduces the problem to real Kuramoto systems. Second, for any non-real initial conditions (i.e., $\mathfrak{Im}~z=y\neq 0$), the conservation of $C(x,y)$ along trajectories is equivalent to the conservation of an alternative quantity
\begin{align}
    E(x,y) :=
    \frac{\omega}{\lambda}
    \frac{\cosh(y)}{\sinh(y)} 
    - \frac{\sin(x)}{\sinh(y)}
    ,~\forall~y \neq 0
    ,
\end{align}
which is the ``energy" function proposed in~\cite[Eq.~(5)]{thumler2023synchrony}. In particular, it holds that
\begin{align}
    E(x,y) = \frac{\sqrt{\omega^2-\lambda^2}}{\lambda}
    \frac{ C(x,y) + 1 }{ C(x,y) - 1 }
    ,~\forall~y\neq 0
    .
\end{align}

Before analyzing the stability around equilibria in the following Theorem~\ref{weak N=2}, we briefly address the issue of global existence of solutions in time, since stability is defined through the dynamical behavior as $t\to\infty$. The analysis of level sets of $E$ in~\cite[Supplementary Material]{thumler2023synchrony} and~\cite{thumler2025collective} shows that for all but one admissible value of $E_0:=E(x_0,y_0)$, determined by the initial condition $z_0=x_0+iy_0$, the level set $\{(x,y):E(x,y)=E_0\}$ is bounded in its $y$-components. In particular, from~\cite[Supplementary Material, Eq.~(S11)]{thumler2023synchrony} one can deduce bounds on $|y|$ (dependent on the initial condition) except the case where $|E_0|=\omega/\lambda$ with the level set satisfying
\begin{align}
    |y| = \log \left( \frac{\omega}{\lambda} \csc(x) \right)
    ,
\end{align}
which we show in Appendix~\ref{appen:finite-time_blow-up} can lead to finite-time blow-up solutions (and hence no global existence of solution in time). Since $f'(z)=-\lambda \cos(z)$ is bounded whenever a bound on $|y|=|\mathfrak{Im}~z|$ can be guaranteed, and the conservation of $E$ ensures that any solution stays on one of its level sets, standard global existence and uniqueness theorem of ODEs $\dot{z}=f(z)$ with Lipschitz $f$ applies except when $|E_0|=\omega/\lambda$.

\begin{theorem} \label{weak N=2}
    Let $0<\lambda<\omega$. Then, every equilibrium point $z=x+iy\in\mathcal{Z}$ of the system~\eqref{n=2 complex} or its equivalent system~\eqref{n=2} is Lyapunov stable but not asymptotically stable. 
    
    Moreover, any solution with initial condition $z_0=x_0+iy_0$ satisfying
    \begin{align}
        \label{eq:condition-periodic-orbit}
        \begin{cases}
            &\sin(x_0) > 0\\
            &|y_0| > \log \left( \frac{\omega}{\lambda} \csc(x_0)\right) 
        \end{cases}
    \end{align}
    is a periodic orbit with the common period $T_{\omega,\lambda}=2\pi/\sqrt{\omega^2-\lambda^2}$.
\end{theorem}

\begin{proof} \label{4.1proof}
With the help of conserved quantities of the system like $C(x,y)$, or equivalently, $E(x,y)$, and direct analyses of the geometry of their level sets around each equilibrium point $z=x+iy\in\mathcal{Z}$ such as those conducted in~\cite[Supplementary Material]{thumler2023synchrony}, it follows directly that these equilibrium points are Lyapunov stable but not asymptotically stable. 

Any solution trajectory starting from an initial condition $z_0=x_0+iy_0$ satisfying~\eqref{eq:condition-periodic-orbit} can be shown to satisfy 
\begin{align*}
    |C(x_0,y_0)|>\frac{\omega+\sqrt{\omega^2-\lambda^2}}{\omega-\sqrt{\omega^2-\lambda^2}}>0,
\end{align*}
with the corresponding level set $\mathcal{S}:=\{z=x+iy:C(x,y)=C(x_0,y_0)\}$ being a simple closed curve~\cite[Supplementary Material]{thumler2023synchrony}. Since there is no equilibrium point on this curve, the solution must orbit along it periodically, by the (global) existence of uniqueness of solutions.

To exactly characterize its period, we employ complex analytical tools. By the residue theorem, the period $T_{\omega,\lambda}$ can be calculated as
\begin{equation}
\begin{aligned}
    T_{\omega,\lambda} &= \oint_\gamma \frac{\mathrm{d}\zeta}{\omega-\lambda\sin(\zeta)}=  \mp 2\pi i \operatorname{Res}\!\big(\frac{1}{f},z_k^{\pm}\big)\\
    &=\mp 2\pi i \frac{1}{f'(z_k^{\pm})}= \frac{2\pi}{\sqrt{\omega^2-\lambda^2}},
\end{aligned}
\end{equation}
where $z_k^{\pm} =2 k \pi + \frac{\pi}{2} \pm i \cosh^{-1}\left( \frac{\omega}{\lambda}\right)\in\mathcal{Z}$ and $\gamma: [0,T_{\omega,\lambda}]\rightarrow \mathbb{C}$ is the curve satisfying $\gamma(t)=z(t)\in \mathcal{S}$.
\end{proof}   

\begin{remark}
    In a precursor to this work~\cite{hsiao2023synchronization}, an alternative proof for periodicity of orbits close to each equilibrium point was developed with the intention of not relying the analysis on any conserved quantity. Here, we present a more economical proof based on conserved quantities, and refer any interested reader to~\cite{hsiao2023synchronization} for an alternative. However, it is worth noting that in the present proof, we can further characterize the exact period of these orbits with powerful tools from complex analysis.
\end{remark}

\begin{remark}
When $\lambda = 0$, the system is in a decoupled state,
so each variable evolves independently.
In this case, the dynamics can be written as $\dot{z} = \omega$.
Therefore, for $z$ to travel a distance of $2\pi$, 
the required time is denoted by $T_{\omega,0}$. On the other hand, we notice that for fixed $\omega$, 
$T_{\omega,\lambda}$ tends to infinity as $\lambda \to \omega^-$.
This occurs because the equilibrium point gradually approaches 
the point $\pi/2$ on the real axis, and a homoclinic orbit 
emerges as a result. In the following, we proceed to discuss the case 
$\lambda = \lambda_c$.
\end{remark}

\subsection{Coupling strength $\lambda=\omega$}

When we consider the case $\lambda = \omega$, we observe that a homoclinic orbit may appear.
Without loss of generality, we set $\lambda = 1$. 
Then the system can be written as $\dot{z} = 1 - \sin(z)$.
By separating variables, we obtain
$2\sin\!\left(\tfrac{z}{2}\right)/(\cos\!\left(\tfrac{z}{2}\right) - \sin\!\left(\tfrac{z}{2}\right)) 
    = t + c$,
where $c = a + i b \in \mathbb{C}$ is a constant determined by the initial condition. 

Since finite-time blow-up may occur, we exclude the cases $b = \pm 1$. A straightforward calculation reveals that
$\tan\!\left(\frac{z}{2}\right) = (t + c)/(t + c + 2)$.
Therefore, as $t \to \pm\infty$, we have $\tan(z/2) \to 1$, 
which implies $z \to \pi/2 + 2k\pi$ for $k\in\mathbb{Z}$. 
Hence, except for the finite-time blow-up or degenerate situations corresponding to the real Kuramoto model, 
the trajectory is a homoclinic orbit. Moreover, the conserved quantity can be explicitly computed as $ C(x,y) = \mathfrak{Im} \int_\gamma \frac{1}{1 - \sin(z)} \, dz 
= \frac{\sinh(y)}{-\cosh(y) + \sin(x)}$.
By plotting the contour lines of $C(x,y)$ in Fig~\ref{fig:h orbit}, 
we can clearly observe that all of the level sets pass through the equilibrium point $z = \pi/2$.
\begin{figure}[H]
    \centering
    \subfloat[Contour lines of $C(x,y)$]
        {\includegraphics[width=0.6\textwidth]{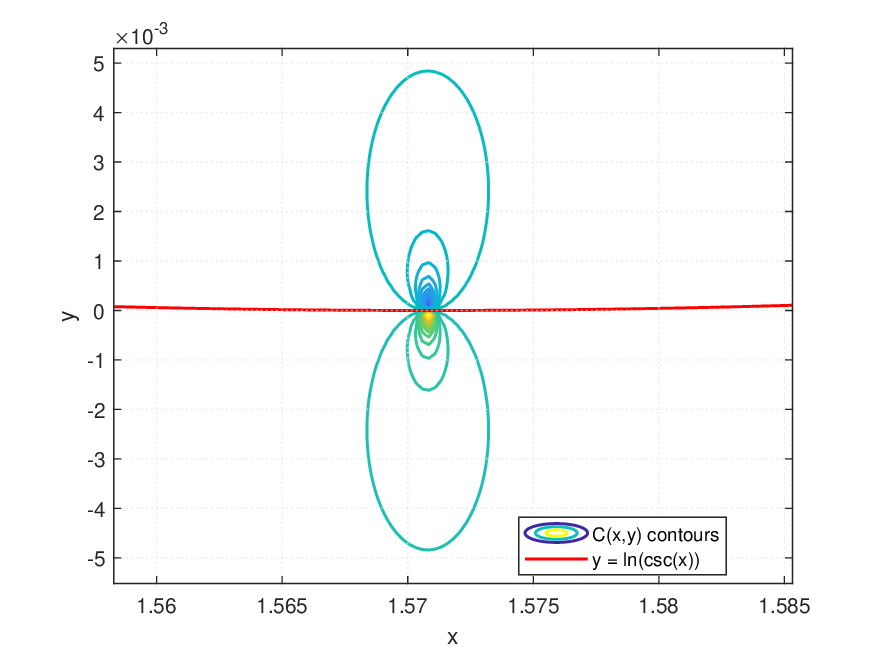}}
    \hfill
    
    \caption{This figure shows the contour lines of the function $C(x,y)$. 
The red curve represents $y = \log(\csc x)$, which corresponds to the level set $C(x,y) = -1$. 
It is known that for any initial condition taken on this curve, the corresponding solution blows up in finite time (see Appendix~\ref{appen:finite-time_blow-up}). Moreover, analytical and numerical computations reveal that the trajectories of the solutions are homoclinic orbits. 
When plotted in the complex plane $\mathbb{C}$, these trajectories coincide with the contour lines of $C$.
}
    \label{fig:h orbit}
\end{figure}

\section{Three oscillators} \label{sec 4.2}
    In this section, we demonstrate that synchronization can be achieved for $N=3$ when the coupling strength satisfies $\lambda<\lambda_c$ by imposing a \emph{Cherry flow}-type ansatz, i.e., we assume the following two conditions in \eqref{main eq} or equivalently \eqref{eq x+iy}. Let $\omega:=\omega_1-\omega_2=\omega_2-\omega_3>0$ and $z_1(0)-z_2(0)=z_2(0)-z_3(0)$. Thus, one may rewrite \eqref{main eq} as
         \begin{equation} \label{n=3} 
    \dot{z}= \;
    \dfrac{\omega}{2}
    -
    \dfrac{\lambda}{3}(\sin(z)+\sin(2z)):=F(z).
\end{equation}
Notice that in this case, $\omega=\lambda_c$ where $\lambda_c$ is defined in \eqref{eq:lambda_c}. We divide the discussion into three cases:
\begin{enumerate}
    \item $ \lambda < \Lambda_c=3\omega/(2\max\limits_{x\in \mathbb{R}}(\sin(x)+\sin(2x)))
    = \sqrt{\frac{69-11\sqrt{33}}{8}} \omega
    = 0.85218915... \omega$. 
    \item $\lambda=\Lambda_c$.
    \item $ \Lambda_c < \lambda < \lambda_c = \omega $.
\end{enumerate}

\subsection{Coupling strength $\lambda<\Lambda_c$} \label{4.2.1} 
In this part, we show that the system of equations \eqref{n=3} possesses two equilibria in each set:
\begin{align}  \label{75 set}
        R_k:=\left\lbrace (x,y) ~\bigg\vert~2k \pi \leq x< 2(k+1) \pi,~y\geq 0\right\rbrace,
\end{align}
where $k\in \mathbb{Z}$, and moreover, one of these equilibria is a stable equilibrium $(x^k,y^k)$ and the other is an unstable equilibrium $(\bar{x}^k,\bar{y}^k)$. We restrict our attention to $y\geq 0$ since it is clear from the system equation~\eqref{n=3} that if $(x(t),y(t))$ is a solution, so is $(x(t),-y(t))$. Due to the periodicity of the system in the real component, without loss of generality, we focus on the equilibria in the set $R_0$. 

Then, by an elementary (but tedious) computation, one may show that the equation $\dot{z}=F(z)$ has exactly two equilibrium points $z_1$ and $z_3$ in $R_0$ with positive imaginary components. Also, one with real component in $(\pi/4,r_1)$ and the other in $(r_3,5\pi/4)$, where $r_1:=\arccos \left( \frac{-1+\sqrt{33}}{8}\right)$ and $r_3 = 2 \pi -\arccos \left( \frac{-1-\sqrt{33}}{8}\right)$. The analysis is derived in Appendix \ref{root of F}.

A straightforward calculation reveals that, for $l=1,3$,
\begin{equation} \label{Re F prime}
\begin{aligned}
    \mathfrak{Re}(F'(z_l))=&-\dfrac{\lambda}{3}\left( \cos(\tilde{x}_l)\cosh(\tilde{y}_l)+2\cos(2\tilde{x}_l)\cosh(2\tilde{y}_l)\right)\\
    \overset{\mathfrak{Im}(F(z))=0}{=}&-
   \frac{\lambda}{3}\cos(x_l)\left( \cosh(y_l)-\dfrac{2\sinh(y_l)\cosh(2y_l)}{\sinh(2y_l)}\right).
\end{aligned}
\end{equation}
Since for all $y>0$, 
\begin{align*} 
&\cosh(y)-\dfrac{2\sinh(y)\cosh(2y)}{\sinh(2y)} <0
,    
\end{align*}
we obtain $z_1$ is an unstable equilibrium and $z_3$ is a stable equilibrium.

\subsection{Coupling strength $\lambda=\Lambda_c$} \label{4.2.2}

We focus on the case $\lambda=\Lambda_c$. We claim that the system of equations \eqref{n=3} possesses two equilibria in $R_k$ (recall \eqref{75 set}). Thus we may verify that one of the equilibrium points is $(x,y)=(r_1,0)$; see Fig.~\ref{fig:M5}. This equilibrium is not stable, since the stable manifold $W^{s}(r_1,0)$ contains at least $\{ (x,y): 0<x \leq r_1,~y=0\}$ and on the other hand, the unstable manifold $W^{u}(r_1,0)$ contains at least $\{ (x,y): r_1<x<2\pi,~y=0\}$, which follows from the real Kuramoto model. That is, we verify the result of Cherry flow with the same natural frequency gaps in \cite{maistrenko2004mechanism} (see Fig.~2(b) therein; our result recovers the stability of equilibria on the diagonal of that figure) when $0.852...<\lambda/\omega<1$, and we characterize the exact lower bound as $3/(2\max\limits_{x\in \mathbb{R}}(\sin(x)+\sin(2x)))
= \sqrt{\frac{69-11\sqrt{33}}{8}
} \approx 0.85218915$, while the rough lower bound on $\lambda/\omega$ given in \cite{maistrenko2004mechanism} corresponds to $1.70/2=0.85$.

Following a similar argument from the case $\lambda<\Lambda_c$ and recalling the definition of $f$, $g_1$ and $g_2$ in \eqref{f function}, \eqref{g1} and \eqref{g2}, we notice that any equilibrium point with $y>0$ cannot exist in either $x\in (\frac{3\pi}{4},r_2) \cup (r_4,\frac{7\pi}{4})$ or $\left(\frac{\pi}{4},r_1\right)$. Therefore, the equilibrium point $z_3=(x_3,y_3)$ with $y_3>0$ and $x_3\in (r_3,5\pi/4)$, is stable.

\begin{figure} \label{fig:M5} 
        \centering 
         \begin{tikzpicture}
            \draw[step=1cm,gray,very thin] (0,-3.14) grid (6.28,3.14);
            \draw[->] (0,0) -- (6.78,0) node[anchor=west] {$x$};
            \draw[->] (0,-3.14) -- (0,3.14);
            \draw[red, line width = 0.4mm]  plot[smooth,domain=0:6.28] (\x,{sin(deg(\x))+sin(deg(2*\x))});
            \draw[red] (3.14,2.24) node[anchor=south] {$h(x):=\sin(x)+\sin(2x)$};
            \draw[blue, line width = 0.4mm]  plot[smooth,domain=-0:6.28] (\x,{cos(deg(\x))+2*cos(deg(2*\x))});
            \draw[blue] (3.14,-2.24) node[anchor=north] {$h'(x)=\cos(x)+2\cos(2x)$};
            \draw[cyan, line width = 0.2mm]  (0,1.625) -- (6.28,1.625) ;
            \draw[cyan,->] (7,1.325) node[anchor=west] { $\Lambda_c<\lambda<\lambda_c$} -- (6.38,1.625) ;
            \draw[cyan, line width = 0.2mm]  (0,1.76) -- (6.28,1.76) ;
            \draw[cyan,->] (7,2.06) node[anchor=west] { $\lambda=\Lambda_c$} -- (6.38,1.76) ;
            \draw[dashed, cyan, line width = 0.2mm]  (0,1.9) -- (6.28,1.9) ;
            \draw[cyan,->] (7,2.795) node[anchor=west] { $\lambda<\Lambda_c$} -- (6.38,2) ;
            \draw[dashed, cyan, line width = 0.2mm] (0,1.5) -- (6.28,1.5) ;
            \draw[cyan,->] (7,0.6) node[anchor=west] {$\frac{3}{2}$} -- (6.38,1.5) ;
            \draw[dashed, black, line width = 0.2mm](0,1.76) node[anchor=east]{$\max\limits_{x\in\mathbb{R}} h(x)$} -- (0.92,1.76) node{$\bullet$} ;
            \draw[dashed, black, line width = 0.2mm] (0.92,1.76) -- (0.92,0) ;
            \draw[dashed, black, line width = 0.2mm] (0.70,1.625) -- (0.70,0) node{$\bullet$} node[anchor=north] {$s_1$} ;
            \draw[dashed, black, line width = 0.2mm] (1.18,1.625) -- (1.18,0) node{$\bullet$} node[anchor=north] {$s_2$} ;
            \draw[black] (0,1/16) -- (0,0) node{$\bullet$} node[anchor=north]{$0$};
            \draw[black] (3.14,1/16) --  (3.14,0) node{$\bullet$} node[anchor=north]{$\pi$};
            \draw[black] (6.28,1/16) --  (6.28,0) node{$\bullet$} node[anchor=north]{$2\pi$};
            \draw[black,dashed] (0.9359,0) node{$\bullet$} -- (0.9359,-3) node{$\bullet$} node[anchor=north]{$r_1$} ;
            \draw[black,dashed] (2.5737,0) node{$\bullet$} -- (2.5737,-3) node{$\bullet$} node[anchor=north]{$r_2$} ;
            \draw[black,dashed] (3.7094,0) node{$\bullet$} -- (3.7094,-3) node{$\bullet$} node[anchor=north]{$r_3$} ;
            \draw[black,dashed] (5.3473,0) node{$\bullet$} -- (5.3473,-3) node{$\bullet$} node[anchor=north]{$r_4$} ;
        \end{tikzpicture} 
         \caption{This plot is an illustration of the function $h(x)$ and its derivative $h'(x)$ on the interval $[0,2\pi]$. We also clarify the values of $3\omega/(2\lambda)$ for $\lambda<\Lambda_c$, $\lambda=\Lambda_c$ and $\Lambda_c<\lambda<\lambda_c$ coupling in cyan: $\lambda<\Lambda_c$ when $3\omega/(2\lambda) > \max\limits_{x\in\mathbb{R}} h(x)$, $\lambda=\Lambda_c$ when $3\omega/(2\lambda)=\max\limits_{x\in\mathbb{R}} h(x)$, and $\Lambda_c<\lambda<\lambda_c$ when $3/2<3\omega/(2\lambda)<\max\limits_{x\in\mathbb{R}} h(x)$.} 
    \end{figure}
\subsection{Coupling strength $\Lambda_c<\lambda<\lambda_c$} \label{Lambda c lambda lambda c}
Finally, we restrict our attention to the case $\Lambda_c<\lambda<\lambda_c$. In this case, the system equations \eqref{n=3} possesses three equilibria in $R_k$ (recall \eqref{75 set}). We find that the first equilibrium point is a stable equilibrium, whose $x$-coordinate belongs to $\left[r_3,\frac{5\pi}{4}\right)$ and $y$-coordinate is strictly positive from \eqref{89}, \eqref{90}. Next, based on Fig.~\ref{fig:M5}, we observe that the second and third equilibria are $(x,y)=(s_1,0)$ and $(x,y)=(s_2,0)$, where $s_1$ is the unique solution to $h(x)=3\omega/(2\lambda)$ with $s_1<r_1$, and $s_2$ is the unique solution to $h(x)=3\omega/(2\lambda)$ with $s_2>r_1$. These two equilibrium points bifurcate from $(r_1,0)$ when $\lambda$ exceeds $\Lambda_c$. 

Recalling \eqref{Re F prime} and Fig.~\ref{fig:M5}, for $l=1,2$, we obtain $\mathfrak{Re}(F'((s_l,0)))=-\lambda h'(s_l)/3.$
Therefore, the equilibrium $(s_1,0)$ is stable and the equilibrium $(s_2,0)$ is unstable, since $h'(s_1)>0$ and $h'(s_2)<0$.

\subsection{Numerical Results} \label{subsubsec:numerical-Cherry}
We conduct brief numerical simulations, the results of which are presented in Fig.~\ref{fig:Cherry}, to verify our theorems in the previous subsections of Sec.~\ref{sec 4.2}. In particular, based on the equivalent ODE system \eqref{n=3} derived from applying the \emph{Cherry flow}-like ansatz in the $N=3$ complexified Kuramoto systems \eqref{eq x+iy}, we numerically evaluate its flow fields (also called slope fields) under different regimes. In all three sub-figures in Fig.~\ref{fig:Cherry}, the horizontal axis represents the $x$ variable, while the vertical axis represents $y$. By periodicity of the system \eqref{n=3} in the $x$ variable, we only plot the flow fields over the horizontal range $x\in[0,2\pi]$; for the purpose of illustration we choose to only plot over the vertical range $y\in[-2,2].$

The system parameters of this numerical evaluation are listed below. We choose the maximum frequency gap $\omega=1$. In Fig.~\ref{fig:Cherry-super-weak}, we choose $\lambda=0.7$, which falls in the coupling regime $\lambda<\Lambda_c$; in Fig.~\ref{fig:Cherry-critically-weak}, we choose $\lambda=\sqrt{(69-11\sqrt{33})/8}=0.85218915...$, which corresponds to the coupling case $\lambda=\Lambda_c$; and in Fig.~\ref{fig:Cherry-weak}, we choose $\lambda=0.99$, which falls in the coupling regime $\Lambda_c<\lambda<\lambda_c$.

The figures are consistent with our theorems in the previous subsections of Sec. \ref{sec 4.2}. First, in Fig.~\ref{fig:Cherry-super-weak} we observe that there are indeed two complex equilibria in $R_0$ (see \eqref{75 set}). With reference to subsection~\ref{4.2.1}, we verify that
the left one is unstable (i.e., $(\bar{x}^0,\bar{y}^0)$) while the right one is stable (i.e., $(x^0,y^0)$). Second, in Fig.~\ref{fig:Cherry-critically-weak} we observe two equilibria in $R_0$, one real and the other complex. With reference to subsection~\ref{4.2.2}, we verify that the left one is unstable (i.e., $(r_1,0)$) while the right one is stable. Finally, in Fig.~\ref{fig:Cherry-weak} we observe three equilibria in $R_0$, two real and the other complex. With reference to subsection~\ref{4.2.2}, we verify that the left real one is stable (i.e., $(s_1,0)$), the right real one is unstable (i.e., $(s_2,0)$), while the complex one is stable. 

\begin{figure}[H]
    \centering
    \subfloat[$\lambda=0.7<\Lambda_c$ . \label{fig:Cherry-super-weak}]
        {\includegraphics[width=0.3\textwidth]{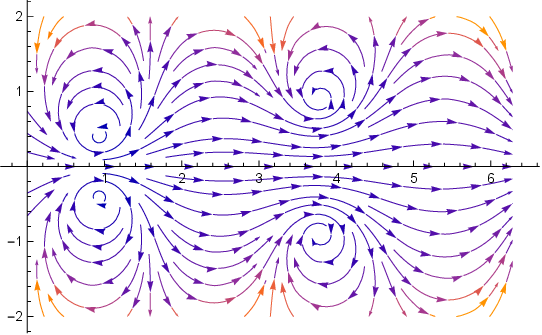}}
    \hfill
    \subfloat[$\lambda=\sqrt{(69-11\sqrt{33})/8}=0.85218915...=\Lambda_c$.\label{fig:Cherry-critically-weak}]
         {\includegraphics[width=0.3\textwidth]{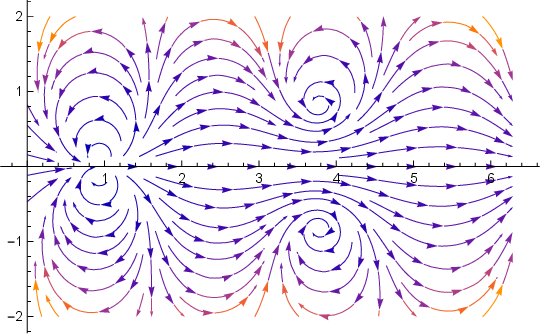}}
    \hfill
    \subfloat[$\Lambda_c<\lambda=0.99<\lambda_c$.\label{fig:Cherry-weak}]
         {\includegraphics[width=0.3\textwidth]{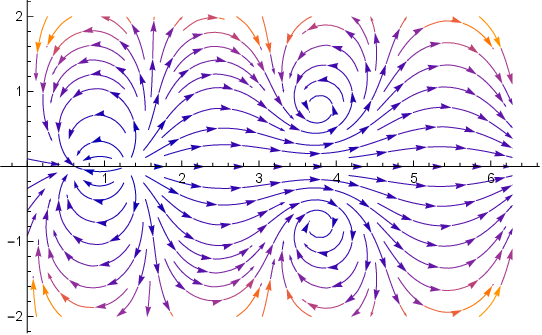}}
    
    \caption{Example flow fields of the equivalent system \eqref{n=3} of $N=3$ oscillators under the Cherry flow-like ansatz. Here, we choose $\omega=1$. The horizontal axis represents the $x$ variable, and the vertical axis represents the $y$ variable.}
    \label{fig:Cherry}
\end{figure}

\section{Acknowledgement}
We thank the two anonymous reviewers and the Associate Editor for their insightful comments and suggestions, which greatly improved the manuscript. We also express our gratitude for the insightful discussions generously provided by Zih-Hao Huang, Yen-Ting Lin, Jun Loo, and Chengbin Zhu.

\section{Data Availability Statement}
Data will be made available on reasonable request.

\section{Declaration}
We do not have any conflict of interest.

\section{Appendix}

\subsection{Solution to $N=2$ complexified Kuramoto model exhibiting finite-time blow-up}  \label{appen:finite-time_blow-up}

We claim in Sec.~\ref{sec1} that for the complexified Kuramoto model~\eqref{eq x+iy} with $N=2$ oscillators, there exists solutions $(x_1(t),y_1(t))$ and $(x_2(t),y_2(t))$ satisfying
\begin{align}
    | y_1(t) - y_2(t) |
    =
    \log \left( \frac{ \omega_1-\omega_2 }{\lambda} \csc( x_1(t) - x_2(t) ) \right)
\end{align}
with finite-time blow-up behavior, where we have assumed $\omega_1>\omega_2$ and $\lambda>0$. We show that this is indeed the case in the following.

Defining $x(t)+iy(t):=z(t):=z_1(t)-z_2(t)$ where $z_j(t):=x_j(t)+iy_j(t)$ for $j=1,2$, and $\omega:=\omega_1-\omega_2>0$ as in Sec.~\ref{sec 4}, we arrive at the same system equation~\eqref{n=2 complex} for $z(t)$ or equivalently~\eqref{n=2} for $(x(t),y(t))$. Since $(x(t),-y(t))$ is also a solution, without loss of generality, we consider $y(t)\geq 0$.

First, we observe that conservation of the ``energy" function
\begin{align}
    E(x,y) := \frac{\omega}{\lambda} \frac{\cosh(y)}{\sinh(y)} - \frac{\sin(x)}{\sinh(y)}
    ,~\forall y\neq 0,
\end{align}
first pointed out in~\cite{thumler2023synchrony} only for the regime of weak coupling $\lambda<\omega$, is in fact a conserved quantity along any non-real solutions for \emph{all} parameter regimes with $\lambda>0$, regardless of the relative strength of $\omega$ versus $\lambda$.

Now, fix any $x_0$ based on different coupling parameter regimes. For $\omega<\lambda$ (strong), take $x_0\in(\pi-\arcsin(\omega/\lambda),\pi)$. For $\omega=\lambda$ (critical), take $x_0\in(\pi/2,\pi)$. For $\omega>\lambda$ (weak), take $x_0\in(0,\pi)$. Then, define
\begin{align}
    y_0 := \log \left( \frac{\omega}{\lambda} \csc(x_0) \right)
    .
\end{align}
A directly computation yields $E(x_0,y_0)=\omega/\lambda$. Observe that
\begin{align}
    E(x,y) = \frac{\omega}{\lambda}
    \Longleftrightarrow
    \lambda \sin(x) = \omega \exp(-y)
    .
\end{align}
Since $E(x,y)$ is conserved along solution trajectories, any solution $(x(t),y(t))$ to~\eqref{n=2} with initial condition $(x_0,y_0)$, extended maximally over time when possible to $[0,T)$ for some $T>0$, must lie on the plane curve defined by $\lambda\sin(x)=\omega\exp(-y)$. In particular, any coordinate $(x,y)$ on this plane curve satisfies $\sin(x)>0$. By our choice of initial condition, this implies that $x(t)<\pi$ for all $t\in[0,T)$.

Recall the $x$-dynamics from~\eqref{n=2}. On the aforementioned plane curve, the $x$-dynamics is equivalently
\begin{align}
    \dot{x}(t)
    &=
    \omega - \lambda \sin(x(t)) \cosh(y(t))
    =
    \omega - \omega e^{-y(t)} \cosh(y(t))
    =
    \omega
    \left(
    \frac{1-e^{-2y(t)}}{2}
    \right)
    .\nonumber
\end{align}
Also, for $t\in[0,T)$, since $\lambda\sin(x(t))=\omega\exp(-y(t))$, we have $ y(t)
    =
    \log\left( \frac{\omega}{\lambda} \csc(x(t))
    \right)$,
and hence the $x$-dynamics also read
\begin{align}
    \dot{x}(t)
    =
    \frac{\omega}{2}
    \left(
    1 - \left(\frac{\lambda}{\omega}\right)^2 \sin^2(x(t))\right)
    .
\end{align}

Next, we aim to show that, in \emph{all} parameter regimes, there exists a positive constant $v_x>0$ such that $\dot{x}(t)\geq v_x$ for all $t\in[0,T)$. When the coupling is weak ($\lambda<\omega$), we can take $v_x = \frac{\omega}{2} \left(
    1 - \left(\frac{\lambda}{\omega}\right)^2 \right)
    > 0$. For critical coupling ($\lambda=\omega$), since $\dot{x}(0)>0$, we can take $ v_x
    =
    \dot{x}(0)
    =
    \frac{\omega}{2}
    \left(
    1 - \sin^2(x_0)\right)
    > 0$.
For strong coupling ($\lambda>\omega$), since $0<\sin(x_0)<\omega/\lambda$ and thus $\dot{x}(0)>0$, we can take $v_x
    =
    \dot{x}(0)
    =
    \frac{\omega}{2}
    \left(
    1 - \left(\frac{\lambda}{\omega}\right)^2 \sin^2(x_0)\right)
    > 0$.

Suppose, for contradiction, that $T=\infty$. Since $\dot{x}(t)\geq v_x>0$ for all $t\in[0,T)$, it only takes a finite amount of time for $x(t)$ to reach $\pi$, in contradiction with our previous result that $x(t)<\pi$ for all $t\in[0,T)$.

\subsection{The roots of $F(z)=0$} \label{root of F}

Let us find the roots of the following system of equations:
\begin{equation} \label{RHS n=3}
\left\{ \begin{aligned}
    \dfrac{\omega}{2}-\dfrac{\lambda}{3}(\sin(x)\cosh(y)+\sin(2x)\cosh(2y))=0,\\
    -\dfrac{\lambda}{3}(\cos(x)\sinh(y)+\cos(2x)\sinh(2y))=0.
\end{aligned}
\right.
\end{equation}
In the following, we first derive a necessary condition for the real part of equilibrium points via a root-finding argument for a polynomial equation in the variable $\sin(2x)$. Then, we develop Lemma~\ref{lemma 4.5}, analogous to root-finding for single variable continuous functions, for bi-variate continuous functions to locate possible equilibrium points of system \eqref{RHS n=3}. We then use the previous necessary condition to rule out invalid candidates.

First, we observe that $y=0$ cannot be a solution of system \eqref{RHS n=3} when the coupling strength satisfies $\lambda<\Lambda_c$. Using hyperbolic function identities, the second equation in system equations \eqref{RHS n=3} yields
\begin{align} \label{77}
    \cos(x)+2\cos(2x)\cosh(y)=0.
\end{align}
By \eqref{77}, we obtain
\begin{align} \label{78}
    \cosh(y)=-\dfrac{\cos(x)}{2\cos(2x)},
\end{align}
which is well-defined, since $\cos(2x)$ cannot be zero; otherwise, via \eqref{77} we have $\cos(x)=0$, which further implies $\cos(2x)=2\cos^2(x)-1=-1$, contradicting $\cos(2x)=0$.

Recalling the hyperbolic function identities for $\cosh(2y)$ and substituting equation system equations \eqref{78} into first equation in \eqref{RHS n=3}, we arrive at
\begin{align} 
    \dfrac{3\omega}{2\lambda}=\sin(x)\left(-\dfrac{\cos(x)}{2\cos(2x)}+2\cos(x)\left(\dfrac{\cos^2(x)}{2\cos^2(2x)}
    -1\right)\right).
\end{align}
We exploit the trigonometric identities and make a straightforward calculation to obtain
\begin{align} \label{80}
    4\sin^3(2x)+\dfrac{6\omega}{\lambda}\sin^2(2x)-3\sin(2x)-\dfrac{6\omega}{\lambda}=0.
\end{align}
Let the cubic polynomial having a root $\sin(2x)$ be
\begin{align}
    p(x)=4x^3+\dfrac{6\omega}{\lambda}x^2-3x-\dfrac{6\omega}{\lambda}.
\end{align}
We observe that $p(-1)=-1<0$ and $p(1)=1>0$, hence there exists at least one real root of $p(x)$ in $(-1,1)$. Also, we have that $p'(-1)=12(1-\omega/\lambda)-3<0$ (since weak coupling, $\lambda>\omega$) and that leading coefficient of $p$ is positive), hence there can only be one real root of $p(x)$ in $(-1,1)$. We can further locate this real root in $(0,1)$ since $p(1)=1>0$ and $p(0)=-6\omega/\lambda<0$. In summary, we have derived a necessary condition for any real part $x$ of an equilibrium point in system \eqref{RHS n=3}, namely, that $\sin(2x)\in(0,1).$


\begin{lemma} [Horizontal cutting lemma] \label{lemma 4.5}
    Consider a continuous function $f:\mathbb{R}^2\rightarrow \mathbb{R}$. Let $g_1: [y_1,y_2]\rightarrow \mathbb{R}$ and $g_2: [x_1,x_2]\rightarrow \mathbb{R}$ be two strictly monotone continuous functions such that $g_1(y_1)=g_2(x_1)$ and $g_1(y_2)=g_2(x_2)$. If $f(x_1,y_1)\cdot f(x_2,y_2)<0$, then the system of equations:
\begin{equation} \label{horizontal cutting lemma}
\left\{ \begin{aligned}
    f(x,y)&=0,\\
    g_2(x)-g_1(y)&=0,
\end{aligned}
\right.
\end{equation}
has at least one root $(\tilde{x},\tilde{y})$, and hence $x_1 < \tilde{x} < x_2$ and $y_1 < \tilde{y} < y_2$.
\end{lemma}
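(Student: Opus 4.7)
The plan is to reduce the two-equation system \eqref{horizontal cutting lemma} to a one-dimensional intermediate value theorem by parameterizing the constraint curve $\{(x,y):g_2(x)=g_1(y)\}$ as the graph of a single continuous function of $x$ connecting $(x_1,y_1)$ to $(x_2,y_2)$.

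First I would verify that $g_1$ and $g_2$ are monotone in compatible directions. Assuming, as is natural, $x_1<x_2$ and $y_1<y_2$, the endpoint equalities $g_1(y_i)=g_2(x_i)$ for $i=1,2$ together with strict monotonicity force $g_1$ and $g_2$ to be either both increasing or both decreasing on their respective intervals. In either case, $g_1^{-1}$ exists as a continuous strictly monotone function on $g_1([y_1,y_2])\supseteq g_2([x_1,x_2])$, so the composition $\varphi(x):=g_1^{-1}(g_2(x))$ is a continuous, strictly increasing bijection from $[x_1,x_2]$ onto $[y_1,y_2]$ satisfying $\varphi(x_i)=y_i$ for $i=1,2$.

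Next I would set $h(x):=f(x,\varphi(x))$ on $[x_1,x_2]$. This is continuous as a composition of continuous functions, with $h(x_1)=f(x_1,y_1)$ and $h(x_2)=f(x_2,y_2)$, and the hypothesis $f(x_1,y_1)\cdot f(x_2,y_2)<0$ gives $h(x_1)\,h(x_2)<0$. The standard single-variable intermediate value theorem then furnishes some $\tilde{x}\in(x_1,x_2)$ with $h(\tilde{x})=0$. Taking $\tilde{y}:=\varphi(\tilde{x})$ yields a point solving both equations of \eqref{horizontal cutting lemma}, and the strict monotonicity of $\varphi$ ensures $y_1<\tilde{y}<y_2$.

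The argument has no deep obstruction; the only point requiring care is the compatible-monotonicity observation above, which guarantees that the constraint curve is the graph of a single-valued, continuous $\varphi$ traversing from $(x_1,y_1)$ to $(x_2,y_2)$ rather than being multi-valued or failing to connect the two endpoints. Once this bookkeeping is done, the whole lemma is an immediate instance of the classical intermediate value theorem applied to $h$.
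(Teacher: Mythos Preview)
Your proposal is correct and follows essentially the same route as the paper's proof: both define $\varphi(x)=g_1^{-1}\circ g_2(x)$ (the paper calls it $G$) and apply the one-variable intermediate value theorem to $x\mapsto f(x,\varphi(x))$. You are in fact more careful than the paper in explicitly verifying the compatible-monotonicity point that makes $\varphi$ well-defined and bijective onto $[y_1,y_2]$.
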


\begin{proof}
Let the continuous function $G$ defined by $G(x)=g^{-1}_1 g_2(x)$. Clearly, because of intermediate value theorem, $G: [x_1,x_2]\rightarrow [y_1,y_2]$ is well-defined. Therefore, we obtain $f(x_1,G(x_1))\cdot f(x_2,G(x_2))<0$. Using the intermediate value theorem again, we complete the proof.
\end{proof}

To apply Lemma~\ref{lemma 4.5} to our system~\ref{RHS n=3}, let
\begin{align} \label{f function}
    f(x,y)
    :=
    \dfrac{3\omega}{2\lambda}
    -(\sin(x)\cosh(y)+\sin(2x)\cosh(2y))
\end{align}
be defined on $(x,y)\in\mathbb{R}^2,$ let
\begin{align} \label{g1}
    g_1(y):=2\cosh(y)
\end{align} 
be defined on $\mathbb{R}^+$ and let
\begin{align} \label{g2}
    g_2(x):=-\dfrac{\cos(x)}{\cos(2x)}
\end{align}
be defined on the domain 
\begin{align} 
    \{x\in[0,2\pi):-\dfrac{\cos(x)}{\cos(2x)}\geq 2\}=
    \left(\dfrac{\pi}{4},r_1\right]
    \bigcup 
    \left(\dfrac{3\pi}{4},r_2\right] \bigcup 
    \left[r_3,\dfrac{5\pi}{4}\right) \bigcup 
    \left[r_4,\dfrac{7\pi}{4}\right),
    \label{intervals}
\end{align}
where $r_l$, $l=1,2,3,4$ satisfies $g_2(r_l)=g_1(0)=2$ and $\pi/4<r_1<3\pi/4<r_2<\pi<r_3<5\pi/4<r_4<7\pi/4$ (see Fig.~\ref{fig:M4}).
\begin{figure} \label{fig:M4} 
        \centering 
        \resizebox{0.9\linewidth}{!}{
        \begin{tikzpicture}
            \draw[step=1cm,gray,very thin] (0,-4) grid (6.28,4);
            \draw[black] (0,1/16) -- (0,0) node{$\bullet$} node[anchor=north]{$0$};
            \draw[black] (3.14,1/16) --  (3.14,0) node{$\bullet$} node[anchor=north]{$\pi$};
            \draw[black] (6.28,1/16) --  (6.28,0) node{$\bullet$} node[anchor=north]{$2\pi$};
            \draw[->] (0,0) -- (6.78,0) node[anchor=west] {$x$};
            \draw[->] (0,-4) -- (0,4);
            \draw[red, line width = 0.4mm]  plot[smooth,domain=0:0.68775] (\x,{-cos(deg(\x))/cos(deg(2*\x)) });
            \draw[dashed, red, line width = 0.2mm] (pi/4,-4) node[anchor=north]{$\frac{\pi}{4}$} -- (pi/4,4) ;
            \draw[red, line width = 0.4mm]  plot[smooth,domain=0.8675:2.275] (\x,{-cos(deg(\x))/cos(deg(2*\x)) });
            \draw[dashed, red, line width = 0.2mm] (3*pi/4,-4) node[anchor=north]{$\frac{3\pi}{4}$} -- (3*pi/4,4) ;
            \draw[red, line width = 0.4mm]  plot[smooth,domain=2.4545:3.83] (\x,{-cos(deg(\x))/cos(deg(2*\x)) });
            \draw[dashed, red, line width = 0.2mm] (5*pi/4,-4) node[anchor=north]{$\frac{5\pi}{4}$} -- (5*pi/4,4) ;
            \draw[red, line width = 0.4mm]
            plot[smooth,domain=4.0082:5.4165] (\x,{-cos(deg(\x))/cos(deg(2*\x)) });
            \draw[dashed, red, line width = 0.2mm] (7*pi/4,-4) node[anchor=north]{$\frac{7\pi}{4}$} -- (7*pi/4,4) ;
            \draw[red, line width = 0.4mm]
            plot[smooth,domain=5.595:6.28] (\x,{-cos(deg(\x))/cos(deg(2*\x)) });
            \draw[red] (3.14,4) node[anchor=south] {$g_2(x):=-\dfrac{\cos(x)}{\cos(2x)}$};
            \draw[step=1cm,gray,very thin] (8,-4) grid (14,4);
            \draw[->] (8,0) -- (14,0) node[anchor=west] {$y$};
            \draw[->] (11,-4) -- (11,4);
            \draw[black] (11,1/16) -- (11,0) node{$\bullet$} node[anchor=north]{$0$};
            \draw[cyan, line width = 0.4mm]
            plot[smooth,domain=11.005:12.32] (\x,{2*cosh(\x-11)});
            \draw[black,-] (11,2);
            \draw[cyan] (11,4.25) node[anchor=south] {$g_1(y):=2\cosh(y)$};
            \draw[blue, line width = 0.4mm] (0,2) node[anchor=east] {$2$} -- (14,2) node[anchor=west] {$2$};
            \draw[blue,dashed] (0.9359,2) node{$\bullet$} -- (0.9359,0) node{$\bullet$} node[anchor=north]{$r_1$} ;
            \draw[blue,dashed] (2.5737,2) node{$\bullet$} -- (2.5737,0) node{$\bullet$} node[anchor=north]{$r_2$} ;
            \draw[blue,dashed] (3.7094,2) node{$\bullet$} -- (3.7094,0) node{$\bullet$} node[anchor=north]{$r_3$} ;
            \draw[blue,dashed] (5.3473,2) node{$\bullet$} -- (5.3473,0) node{$\bullet$} node[anchor=north]{$r_4$} ;
            \draw[black] (11,2+1/16) -- (11,2) node{$\bullet$};
        \end{tikzpicture} }
        \caption{These two plots are illustrations of Lemma~\ref{lemma 4.5} for the specific functions $g_1(y)$ (right) and $g_2(x)$ (left). The indigo line shows that for $l=1,2,3,4,$ $g_2(r_l)=g_1(0)=2$.} 
    \end{figure}
By observing Fig.~\ref{fig:M4}, we can see that for any $y>0$, there are exactly \emph{four} corresponding $x$-values for which $g_1(y)=g_2(x)$ (which is equivalent to the second equation in \eqref{RHS n=3}), one in each interval in \eqref{intervals}. To further locate $r_l$, we observe that $r_l$'s satisfying the equation $g_2(r_l)=2$ can be solved exactly as
\begin{align}
    r_1 &= \arccos 
    \left( 
    \frac{-1+\sqrt{33}}{8}
    \right)
    ,\quad
    r_2 = \arccos 
    \left( 
    \frac{-1-\sqrt{33}}{8}
    \right)
    ,\quad
    \\
    r_3 &= 
    2 \pi -
    \arccos 
    \left( 
    \frac{-1-\sqrt{33}}{8}
    \right)
    ,\quad
    r_4 = 
    2 \pi -
    \arccos 
    \left( 
    \frac{-1+\sqrt{33}}{8}
    \right).
\end{align}

It is straightforward to see that $\sin(2x)<0$ when $x\in (\frac{3\pi}{4},r_2]\bigcup [r_4,\frac{7\pi}{4})$. By the necessary condition $\sin(2x)\in (0,1)$ that we derived for the real part of any equilibrium above, we conclude that no equilibrium can have its real part in these two intervals.

Next, we apply Lemma~\ref{lemma 4.5} on the domains $(\frac{\pi}{4},r_1]$ or $ [r_3,\frac{5\pi}{4})$ respectively (observe via Fig.~\ref{fig:M4} that $g_2(x)$ is monotonic restricted to each) to ensure that there exists equilibria whose real component lie in those intervals. By a direct calculation, we have
\begin{align} \label{87}
    f(r_1,0)&=\dfrac{3\omega}{2\lambda}-\sin(r_1)-\sin(2r_1)
    =
    \dfrac{3\omega}{2\lambda}
    - \max\limits_{x\in\mathbb{R}} (\sin(x)+\sin(2x))
    ,\\ \label{88}
    f\left(\left(\dfrac{\pi}{4}\right)^+,\infty\right)&=-\infty.
\end{align}
The second equality in \eqref{87} can be seen via Fig.~\ref{fig:M5}, noting that $g_2(x)=2$ if and only if $h'(x)=0$, where $h(x):=\sin(x)+\sin(2x)$. From \eqref{87} we have $f(r_1,0)>0$ in the case $\lambda<\Lambda_c$, by definition. Hence by means of \eqref{87}, \eqref{88} and Lemma \ref{lemma 4.5}, we ensure that 
there exits at least one $\tilde{x}_1\in(\pi/4,r_1)$, $\tilde{y}_1>0$ such that $(\tilde{x}_1,\tilde{y}_1)$ is a root for system of equations \eqref{RHS n=3}. From Fig.~\ref{fig:M4} we further conclude that there are no other roots in $R_0$ for which the real part lies in the same interval $(\pi/4,r_1]$.

Applying the same procedure to search the interval $[r_3,5\pi/4)$, we have
\begin{align} \label{89}
    f(r_3,0)&=\dfrac{3\omega}{2\lambda}-\sin(r_3)-\sin(2r_3)
    \approx
    \dfrac{3\omega}{2\lambda}
    - 0.369
    > 0 ,\\ \label{90}
    f\left(\left(\dfrac{5\pi}{4}\right)^-,\infty\right)&=-\infty.
\end{align}
(Notice that via \eqref{89}, even in the case $\lambda<\Lambda_c$ and the case $\Lambda_c<\lambda<\lambda_c$, we still have $f(r_3,0)>0$.) Combining \eqref{89}, \eqref{90} and Lemma \ref{lemma 4.5}, it is immediate that there exists a \emph{unique} pair
$\tilde{x}_3\in(r_3,5\pi/4)$, $\tilde{y}_3>0$ such that $(\tilde{x}_3,\tilde{y}_3)$ is a root for system of system equations \eqref{RHS n=3}.

To recapitulate, we proved that system equations \eqref{n=3} only have two equilibrium points in $R_0$, where one of them has its real component in $(\pi/4,r_1)$ and the other in $(r_3,5\pi/4)$.

\bibliographystyle{siamplain}
\bibliography{references}
\end{document}